\def\rmlabel{\upshape({\itshape \roman*\,})}
\def\alabel{\upshape({\itshape \alph*\,})}
\def\Alabel{\upshape({\itshape \Alph*\,})}
\def\moverlay{\mathpalette\mov@rlay}
\def\mov@rlay#1#2{\leavevmode\vtop{   \baselineskip\z@skip \lineskiplimit-\maxdimen
		\ialign{\hfil$\m@th#1##$\hfil\cr#2\crcr}}}
\newcommand{\charfusion}[3][\mathord]{
	#1{\ifx#1\mathop\vphantom{#2}\fi
		\mathpalette\mov@rlay{#2\cr#3}
	}
	\ifx#1\mathop\expandafter\displaylimits\fi}
\newcommand{\td}[1]{\relax}
\newcommand{\blu}[1]{\textcolor{blue!80!black}{#1}}
\newcommand{\red}[1]{\textcolor{red!80!black}{#1}}
\newcommand{\df}[1]{{\bf\emph{#1}}}
\theoremstyle{definition}
\newtheorem{definition}{Definition}[section]
\theoremstyle{plain}
\newtheorem{theorem}[definition]{{Theorem}}
{Lemma}
{Theorem}
\newtheorem{lemma}[definition]{{Lemma}}
\newtheorem{fact}[definition]{Fact}
\newtheorem{claim}[definition]{Claim}
{Conjecture}
\newtheorem{defi}[definition]
{{Definition}}
\theoremstyle{remark}
\numberwithin{equation}{section}
\numberwithin{figure}{section}
\newcommand{\eps}{\varepsilon}
\newcommand{\rs}{r^*}
\newcommand{\kepp}{K_{s}^{\eps}}
\newcommand{\Gepp}{\mathbf{G}_{s}^{\eps}}
\newcommand{\tG}{\tilde G_n}
\newcommand{\bw}{\mathbf{w}}
\title[Ramsey number for cycles]
{On the restricted size Ramsey number\\ for a pair of cycles}
\author{Tomasz \L{}uczak}
\address{Adam Mickiewicz University\\
Faculty of Mathematics and Computer Science\\
Umultowska 87,
 61-614 Pozna\'n, Poland}
\email{\tt tomasz@amu.edu.pl}
\author{Joanna Polcyn}
\address{Adam Mickiewicz University\\
Faculty of Mathematics and Computer Science\\
Umultowska 87,
 61-614 Pozna\'n, Poland}
\email{\tt joaska@amu.edu.pl}
\thanks{
The last author  was supported in part by a grant from IPM (No.1401050039) 
and by insf (N0. 99013670)
}
\author{Zahra Rahimi}
\address{
Institute for Research in Fundamental Sciences (IPM)\\
School of Mathematics\\
 P.O. Box: 19395-
5746\\
 Tehran, Iran 
}
\email{\tt zahra.rahimi@alumni.iut.ac.ir}
\keywords {Ramsey number, cycles}
\subjclass[2010]{Primary: 05D10, secondary: 05C38, 05C55. }
\begin{document}

\maketitle

\begin{abstract}
For graphs $H_1,H_2$ by $r^*(H_1,H_2)$ we denote the minimum number of edges in 
a graph $G$ on $r(H_1,H_2)$ vertices such that $G\to (H_1,H_2)$. We show that for 
each pair of natural numbers $k,n$, $k\le n$, where $k$ is odd and $n$ is large enough, we 
have 
$$\rs(C_n,C_k)=\lceil (n+1)(2n-1)/2\rceil \,.$$
\end{abstract}

\section{Introduction}

In the paper we use the standard arrow notation and write $G\to (H_1,H_2, \dots, H_r)$ if any coloring of the edges of a graph
$G$ with $r$ colors leads to a copy of $H_i$ in the $i$th 
color for some $i=1,2, \dots, r$. Let us recall that the Ramsey number $r(H_1,H_2, \dots, H_r)$ is the minimum number of vertices 
in graph $G$ such that $G\to (H_1,H_2, \dots, H_r)$, while the size Ramsey number $\hat r(H_1,H_2, \dots, H_r)$  is the minimum number of edges in a graph~$G$ with such a property. In general Ramsey numbers are hard to find, yet it have been computed, or estimated, for some simple families of graphs. In particular, $r(H_1,H_2)$ is known for $H_1,H_2$ which are either paths or cycles (see Gerencs{\'e}r, Gy{\'a}rf{\'a}s~\cite{GG}, Rosta~\cite{R}, Faudree, Schelp~\cite{FS}, or Radziszowski~\cite{Ra} for a survey of results on Ramsey Theory). The size Ramsey number 
for paths and cycles are not so well understood. In the fundamental paper on this subject
Beck \cite{B}  proved that $\hat r(P_n,P_n)=O(n)$.
The exact value of the hidden constant is not known and finding it seems to be hard. The best estimates we have got so far are those by Bal, DeBiasio~\cite{BD}, and Dudek, Pra{\l}at~\cite{DP}  
$$ (3.75-o(1))n\le \hat r(P_n,P_n) \le 74n\,.$$

The size Ramsey number for cycles was studied by  Haxell, Kohayakawa, and {\L}uczak~\cite{HKL} who proved that 
$\hat r(C_n,C_n)\le An$ 
for some huge  constant $A$. The value of the constant~$A$ has been substantially improved  by Javadi, Khoeini,  Omidi, and  Pokrovskiy \cite{JKOP}, and then by Javadi and Miralaei \cite{JM};
both of the above papers  also considered the case of more than one color. 
However, finding the exact value of the limit $\hat r(C_n, C_k)/n$ (or even prove that it exists) seems to be out of  reach right now. 

In this paper we study yet another version of Ramsey number. By the restricted size Ramsey number  {$\rs(H_1,H_2, \dots, H_r)$} we denote the minimum number of edges in a graph $G$ on $r(H_1,H_2,\dots, H_r)$ vertices such that $G\to (H_1,H_2, \dots, H_r)$. Although this notion has been around for quite some time (see Faudree and Schelp~\cite{FS}) there are rather few results on the restricted Ramsey number in the literature.
The goal of this note is to find  the exact value of
$\rs(C_n, C_{k})$ where $k$ is odd and $n$ is large. Note that for such choice of $k$ and $n$, $r(C_n, C_k)=2n-1$ and one of the colorings used to show the lower bound is when we divide $K_{2n-2}$ into two equal parts and color all edges between them in the second  color.

\begin{theorem}\label{thm:main}
There exists a constant $n_0$ such that for every $n\ge n_0$ and any odd $k$, with 
$3\le k \le n$, we have
$$\rs(C_n,C_k)=\lceil (n+1)(2n-1)/2\rceil \,.$$
\end{theorem}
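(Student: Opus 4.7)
I would show that any graph $G$ on $2n-1$ vertices with $G\to(C_n,C_k)$ satisfies $\delta(G)\ge n+1$, which together with the parity of the degree sum yields $|E(G)|\ge\lceil(n+1)(2n-1)/2\rceil$. Assume for contradiction that $v\in V(G)$ has $d:=\deg_G(v)\le n$, and set $M=V(G)\setminus(N(v)\cup\{v\})$, so $|M|=2n-2-d\ge n-2$. Choose a partition $V(G)=X\cup Y$ with $v\in X$, $|X|=n$, $|Y|=n-1$ and at most one edge from $v$ into $X$: when $d\le n-1$ take $X=\{v\}\cup M'$ for any $M'\subseteq M$ of size $n-1$, and when $d=n$ take $X=\{v\}\cup M\cup\{u\}$ for any $u\in N(v)$. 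Color edges inside $X$ or inside $Y$ red and crossing edges blue. The blue graph is bipartite with parts of sizes $n$ and $n-1$, so has no odd cycle, hence no $C_k$; the red graph is $G[X]\cup G[Y]$, and $G[Y]$ has too few vertices for a $C_n$, while $v$ has degree at most $1$ in $G[X]$, so any $C_n$ in $G[X]$ would have to omit $v$ and sit on the $n-1$ vertices of $X\setminus\{v\}$, which is impossible.

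\textbf{Upper bound.} I would construct an explicit near-$(n+1)$-regular graph $G^*$ on $2n-1$ vertices with exactly $\lceil(n+1)(2n-1)/2\rceil$ edges and verify $G^*\to(C_n,C_k)$. A natural choice is a circulant with connection set $\{\pm 1,\pm 2,\dots,\pm\lfloor(n+1)/2\rfloor\}$ on $\mathbb{Z}_{2n-1}$, adjusted by one edge when $n$ is even to correct the parity. To verify the arrow property I would argue by contradiction: suppose the edges of $G^*$ are $2$-colored with no red $C_n$ and no blue $C_k$. Since $r(C_n,C_k)=2n-1$, the extremal coloring of $K_{2n-2}$ is essentially unique and has blue~$=K_{n-1,n-1}$ and red~$=2K_{n-1}$; a stability version of this fact should force the given coloring of $G^*$ to be close to the same pattern on all but one ``exceptional'' vertex $v^*$, giving a bipartition $A\cup B$ of $V(G^*)\setminus\{v^*\}$ with $|A|=|B|=n-1$, with blue essentially between $A$ and $B$ and red essentially inside the parts. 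The minimum degree at least $n+1$ then forces $v^*$ or one of the non-extremal edges to create a monochromatic cycle of the forbidden length: either a red path in $G^*[A]$ or $G^*[B]$ extended by such an edge yields a $C_n$, or a blue path crossing the bipartition an odd number of times closes up to a $C_k$.

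\textbf{Main obstacle.} The lower bound is a short partition argument with no substantive obstacle. The difficulty lies entirely in the upper bound. First, the stability step has to upgrade the qualitative hypothesis ``no red $C_n$ and no blue $C_k$'' into an almost-bipartite description of the blue graph, despite $G^*$ being only about half as dense as $K_{2n-1}$ and despite the fact that the classical stability results for $r(C_n,C_k)$ concern dense host graphs. Second, after the approximate bipartition $A\cup B$ is extracted, one must argue that the rigid circulant structure of $G^*$ makes every possible placement of the exceptional vertex $v^*$, and every extra edge outside the bipartition, actually close up to a monochromatic cycle of the \emph{exact} forbidden length. Since $k$ ranges over all odd values in $[3,n]$, producing blue odd cycles of many different lengths from the mixed paths available in $G^*$ is where I expect the bulk of the combinatorial work to be; the cases $k$ close to $n$ and $k$ small will likely need to be separated, the former handled by path-lengthening inside one colour class and the latter by short bridging arguments across the bipartition.
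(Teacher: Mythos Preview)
Your lower bound is correct and essentially identical to the paper's: both locate a vertex $v$ of degree at most $n$, place all but at most one of its neighbours on the far side of a bipartition into parts of sizes $n-1$ and $n$, and colour the crossing edges with the second colour.

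For the upper bound your two-step outline---a stability step forcing an approximately bipartite monochromatic structure, followed by an endgame around the exceptional vertex---is exactly the architecture of the paper: Lemma~\ref{l:RL} is the stability step and Lemma~\ref{l:D} is the endgame. The substantive divergence is your choice of witness graph. The paper does \emph{not} use a circulant; it takes a pseudorandom ``$n$-fit'' graph (essentially a random $(n{+}1)$-regular graph on $2n-1$ vertices, constructed in Section~\ref{sec:G}), and the pseudorandomness is used throughout. In the stability step it guarantees that every regular pair in the reduced graph has density close to $1/2$, so the majority colour always has density at least $1/5$ and Lemma~\ref{l:match} applies. In the endgame the uniform codegrees drive Claim~\ref{cl:degree} (every vertex but one has many neighbours in each part), and the uniform edge counts between arbitrary sets drive the Chv\'atal--Erd\H{o}s-style absorption in Claim~\ref{cl:redcycle} that produces red cycles of every length up to $|W_i|$.

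The circulant on $\mathbb Z_{2n-1}$ with connection set $\{\pm1,\dots,\pm\lfloor(n{+}1)/2\rfloor\}$ has none of these features: two antipodal arcs of length $n/3$ span \emph{no} edges at all, and a single vertex can have all of its neighbours inside one half of a bipartition. So the paper's arguments do not transfer, and it is not even clear that this circulant has the arrow property; the rigidity you hope to exploit in the endgame is the same rigidity an adversary can exploit when colouring. The paper's point is precisely that a pseudorandom host makes \emph{both} steps tractable at once: the Regularity Lemma handles stability uniformly over all odd $k$ (with only a coarse split into small and large $k$, much as you anticipated), and the uniform edge distribution reduces the endgame to three short claims rather than a case analysis over placements in $\mathbb Z_{2n-1}$.
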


The structure of the paper is the following. In the next section we derive Theorem~\ref{thm:main} from two Lemmata~\ref{l:RL} and~\ref{l:D} on colorings of a pseudorandom graph $\tG$. A precise definition of $\tG$, and the way one can construct it, is given in Section~\ref{sec:G}, while the proofs of Lemmata \ref{l:RL} and \ref{l:D} can be found in Sections~\ref{sec:RL} and~\ref{sec:D} respectively. 

We conclude this part of the paper with some remarks and comments. 
Note that calculating $r^*(C_n,C_n)$,  in a way, supplements results related to Shelp's problem who asked about the minimum $a$ so that every  graph $G$ on $N=r(C_n,C_n)$ vertices and the minimum degree $aN$ has the property $G\to (C_n,C_n)$ (see Benevides {\it et al.}~\cite{Ben} for the solution of Shelp's problem, and Łuczak and Rahimi~\cite{LZ1, LZ2} for the case of three colors). 

Finding $r^*(H_\ell, H_\ell)$ can be also viewed as a special case of the following question: given a graph $H_\ell$ on $n$ vertices, and a function $k=k(\ell)$, what is the minimum number of edges in  a graph $G$ on  $r(H_\ell,H_\ell)+k$ vertices such that $G\to (H_\ell,H_\ell)$? Although in this formulation this problem looks slightly artificial, a moment of reflection reveals that it could be (and usually is) related to the following question on random graphs: what is the largest $\ell$ 
so that  with probability $1-o(1)$ each coloring of the edges of the random graph $G(n,p)$  with two colors  leads to a monochromatic copy of $H_\ell$? As far as we know this problem was first addressed and solved for paths  by Letzter~\cite{Let} when $p=p(n)=\Theta(1/n)$, whereas for cycles and all range of $p=p(n)$ it was treated in the recent paper of Ara\'ujo {\it et al.}~\cite{A}.

Finally, one can ask about the value of $r^*(C_n,C_k)$ when $k\le n$ is an even number. This question seems to be  particularly interesting when $k$ is much smaller than $n$. For instance, in the simplest case when $k=4$, we have  $r(C_n, C_4)=n+1$ and so one can  expect that $r^*(C_n,C_4)=n^{1/2+o(1)}$, but at this moment the lower and  upper bounds for $r^*(C_n,C_4)$ we can prove are both far from the conjectured $n^{1/2+o(1)}$.

\section{Proof of Theorem~\ref{thm:main}}

Here we describe how to show Theorem~\ref{thm:main} using two Lemmata~\ref{l:RL} and~\ref{l:D} proofs of which we postpone until the following sections of the article. As we shortly see the lower bound for $r^*(C_n,C_k)$  is quite simple to obtain, 
so the main part of the proof is to find a graph $\tG$ with $r(C_n,C_k)$ vertices and $r^*(C_n,C_k)$ edges, 
and verify that any coloring of edges of $\tG$ leads to either $C_n$ in the first color, or $C_k$ in the other color. 

For $\tG$ we take a pseudorandom $(n+1)$-regular (or almost $(n+1)$-regular, if $n$ is even) graph on $2n-1$ vertices which we call $n$-fit; in the following  Section~\ref{sec:G} we give a precise definition of $n$-fit graphs and describe how to construct them. Our argument  consists of two parts. Firstly, we use the Regularity Lemma to verify that any coloring of $\tG$ without monochromatic $C_n$ and $C_k$ in the appropriate colors resembles the extremal coloring of $K_{2n-2}$ which avoids such $C_n$ and $C_k$, i.e. it contains 
a large induced bipartite subgraph in one of the colors. Our result,  proved in Section~\ref{sec:RL}, can be stated as follows. 

\begin{lemma}\label{l:RL}
There exists $n_1$ such that for every $n\ge n_1$ the following holds. If $k\le n$ is odd and the edges of an $n$-fit graph $\tG$ are colored with two colors so that there are no copies of $C_n$ in the first color and  no copies of $C_k$ in the second 
color, then there exists two disjoint subsets of vertices of $\tG$, $V'$ and $V''$, $|V'|,|V''|\ge 0.99n$, such that 
all edges between $V'$ and $V''$ are of the same color.
\end{lemma}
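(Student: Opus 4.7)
The plan is to apply Szemer\'edi's Regularity Lemma to the $2$-colored graph $\tG$ and extract from the resulting reduced graphs a bipartition $[t]=A\cup B$ whose lift to $V(\tG)$ will give the sets $V',V''$. Write $V_0,V_1,\dots,V_t$ for the regularity partition produced with a sufficiently small parameter $\eps\ll 1$, and define reduced graphs $R_1,R_2$ on $[t]$ by declaring $\{i,j\}\in R_c$ iff the pair $(V_i,V_j)$ is $\eps$-regular in color~$c$ with density at least some fixed small $d>0$. Because $\tG$ is $n$-fit, every pair $(V_i,V_j)$ has total density close to $1/2$, so essentially every $\{i,j\}\in\binom{[t]}{2}$ lies in $R_1\cup R_2$.

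\textbf{Structural constraints from the two colors.} The absence of $C_k$ in color~$2$ is exploited via the standard embedding lemma for regular pairs: any odd cycle of reduced length $s$ in $R_2$ produces, in color~$2$, copies of $C_\ell$ for all odd $\ell$ with $s\le \ell\le (1-o(1))sm$, where $m\approx(2n-1)/t$ is the cluster size. Hence $R_2$ admits no odd cycle of reduced length $s$ with $sm\ge k$, and after discarding at most $o(t)$ bad vertices, $R_2$ is bipartite with classes $A,B$. The absence of $C_n$ in color~$1$ constrains $R_1$ by the analogous embedding: a cycle of $R_1$ of reduced length $s$ gives $C_n$ whenever $s\le n\le (1-o(1))sm$, forcing the longest cycle of $R_1$ to have length at most $nt/(2n-1)+o(t)\approx t/2$. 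Since $R_1\cup R_2$ is essentially complete and $R_2$ sits (up to negligible error) between $A$ and $B$, the graph $R_1$ must contain almost every edge inside $A$ and inside $B$; the cycle-length bound then forces $|A|,|B|\le (1/2+o(1))t$, so $|A|\approx|B|\approx t/2$ and $R_1$ decomposes as two almost-cliques on $A$ and~$B$.

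\textbf{Assembling $V',V''$ and cleanup.} Take $V'=\bigcup_{i\in A}V_i$ and $V''=\bigcup_{i\in B}V_i$; by the regularity count and $n$-fitness, $|V'|,|V''|\approx n$ and almost every edge between them lies in color~$2$. To upgrade ``almost'' to ``all'', I delete from $V'$ (respectively $V''$) every vertex incident to a color-$1$ edge reaching the other side, and also absorb the exceptional set $V_0$. For each cross-pair $(V_i,V_j)$ with $i\in A$, $j\in B$, being in $R_2$ means the color-$1$ density is below $d$, so the total number of deleted vertices is at most $O(dn)+\eps(2n-1)$; choosing $\eps$ and $d$ sufficiently small makes this at most $0.01n$ per side, leaving $|V'|,|V''|\ge 0.99n$ with every cross-edge colored~$2$, as required.

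\textbf{Main obstacle.} The technically sharpest step is establishing the near-bipartite structure of $R_2$ when $k$ is comparable to $n$. In that regime a single triangle in $R_2$ need not embed $C_k$, because $3m$ may be smaller than $k$; one must instead appeal to odd cycles of $R_2$ of sufficiently large reduced length~$s$ and close them into $C_k$ by zig-zagging along $\eps$-regular pairs while preserving parity. Handling this uniformly across the whole range $3\le k\le n$ is the technical core of the lemma.
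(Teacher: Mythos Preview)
Your regularity-based strategy matches the paper's, but two steps contain genuine gaps.

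\textbf{The cleanup step fails.} You propose to delete from $V'$ every vertex incident to a color-$1$ cross-edge, claiming this removes only $O(dn)$ vertices because cross-pairs have color-$1$ density below $d$. But low density does not bound the number of \emph{incident vertices}: a bipartite graph on $n+n$ vertices with edge density $d$ can have $dn$ such edges at \emph{every} vertex, so your deletion may empty $V'$ entirely. The paper closes this gap by a different mechanism. Having found the reduced bipartition $(\mathbf{W}_1,\mathbf{W}_2)$ with (say) red inside each class, it observes that each $\mathbf{W}_i$ has the connected-matching property $M_{0.498s}$ and applies Lemma~\ref{l:match} a second time: this lifts to sets $V'_i\subseteq V(\tG)$, $|V'_i|\ge 0.995n$, in which any two vertices are joined by a red path of every length in $[3s,0.9n]$. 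If there were two vertex-disjoint red edges between $V'_1$ and $V'_2$, splicing them with two such paths would produce a red $C_n$, a contradiction; hence removing the endpoints of at most one red cross-edge suffices. This ``two disjoint cross-edges $\Rightarrow$ long cycle'' argument is the key idea missing from your proposal.

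\textbf{Bipartiteness of $R_2$ is not established for large $k$.} You correctly flag this as the main obstacle but do not resolve it: when $k$ is comparable to $n$, forbidding a blue $C_k$ only rules out odd reduced cycles of length near $t/2$, which does not force $R_2$ to be bipartite, and your ``zig-zagging'' remark is not an argument. The paper sidesteps this by working with the connected-matching property $M_{(1/2+13\eps)s}$ rather than with odd cycles directly: the absence of $C_n$ in red and of $C_k$ in blue (for $k\ge 3s$) forces \emph{both} reduced colors to lack $M_{(1/2+13\eps)s}$, and a separate structural result (Lemma~\ref{l:2}, which invokes the monochromatic-path theorem of Benevides~\emph{et al.}) then yields a large induced bipartite subgraph in \emph{one} of the colors. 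For $k\le 7s$ the blue reduced graph is triangle-free and the simpler Lemma~\ref{l:1} applies. Note in particular that for large $k$ the bipartite structure may arise in either color, not only in color~$2$ as you assume.
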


Then we supplement the above with the following result proved in Section~\ref{sec:D}.

\begin{lemma}\label{l:D}
There exists $n_2$ such that for every $n\ge n_2$ the following holds. 
Let us suppose that the edges of an $n$-fit graph $\tG$ are colored with two colors so that there exists two disjoint subsets of vertices $\tG$, $V'$ and $V''$, $|V'|,|V''|\ge 0.99n$, such that  all edges between $V'$ and $V''$ are of the same color.
Then the subgraph induced by one of the colors contains a copy of cycle $C_\ell$ for each $3\le \ell\le n$.
\end{lemma}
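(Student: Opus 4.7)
The plan is to assume, without loss of generality, that every edge of $\tG$ between $V'$ and $V''$ takes the first color (call it \emph{red}), and to split into two cases according to whether the red subgraph of $\tG$ is bipartite.

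\emph{Bipartite case.} Fix a bipartition $(P_1,P_2)$ of $V(\tG)$ into red-independent sets. Since all $V'$--$V''$ edges are red, I may take $V'\subseteq P_1$ and $V''\subseteq P_2$, whereupon every edge of $\tG$ lying inside $P_1$ or inside $P_2$ is forced to be blue. Because $|P_1|+|P_2|=2n-1$, one part\,---\,say $P_1$\,---\,has $|P_1|\ge n$. By the $n$-fit pseudorandomness, $\tG[P_1]$ is close to $\tfrac{n+1}{2}$-regular on $|P_1|$ vertices, which gives $\delta(\tG[P_1])>|P_1|/2$ (so Dirac's theorem provides a Hamilton cycle) and $e(\tG[P_1])>|P_1|^2/4$, while pseudorandomness rules out $\tG[P_1]\cong K_{|P_1|/2,|P_1|/2}$. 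Bondy's pancyclicity theorem then yields $C_\ell\subseteq\tG[P_1]\subseteq$ blue for every $3\le\ell\le|P_1|$, covering the whole range $[3,n]$.

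\emph{Non-bipartite case.} Here I aim to show that red itself contains every $C_\ell$ with $3\le\ell\le n$. Red already contains the bipartite graph $B:=\tG[V',V'']$, which by the $n$-fit pseudorandomness is dense enough that, between any prescribed endpoints $u,v$, $B$ admits $u$--$v$ paths of every feasible length; in particular $B$ contains $C_\ell$ for every even $\ell\in[4,n]$. To build the odd red cycles I extract from the non-bipartiteness a short red \emph{odd shortcut}: either (a) a red edge with both endpoints in $V'$ or in $V''$, or (b) a red $2$-path $x$--$w$--$y$ through some $w\in V'''$ with $x\in V'$ and $y\in V''$. Concatenating such a shortcut of length $s\in\{1,2\}$ with a pseudorandom $B$-path of length $\ell-s$ of the matching parity then yields a red $C_\ell$ for every odd $\ell\in[3,n]$.

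The main obstacle is the structural step in the non-bipartite case: showing that every non-bipartite red configuration\,---\,consistent with the constraint that all $V'$--$V''$ edges are red\,---\,does contain a shortcut of type (a) or (b). This will be handled by analysing a shortest odd red cycle and tracking how many of its vertices fall in $V'''$. In the absence of same-side red edges, any sub-path of the cycle lying inside $V'\cup V''$ must alternate sides, and the resulting parity considerations force either a same-side red edge, a $V'''$ vertex red-adjacent to both $V'$ and $V''$, or, in a residual sub-case with the red odd structure living entirely inside $V'''$, a slightly longer shortcut through two $V'''$ vertices that still plugs into a $B$-path by the $n$-fit expansion of $\tG$. Once a shortcut is in hand, the length-matching step is routine.
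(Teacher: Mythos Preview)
Your dichotomy ``red bipartite $\Rightarrow$ blue pancyclic; red non-bipartite $\Rightarrow$ red pancyclic'' is not correct, and this is where the argument breaks.

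In the non-bipartite case you are committed to finding \emph{every} odd cycle, including $C_3$, inside red. But your residual sub-case only produces a shortcut of length at least three (say $v_1$--$w_1$--$w_2$--$v_2$ with $w_1,w_2\in V'''$ and $v_1,v_2$ on the same side), and concatenating with a $B$-path gives odd cycles only of length $\ge 5$. Nothing in the $n$-fit axioms forces $w_1$ and $w_2$ to have a common red neighbour in $V'$: property~(C) controls $|N(w_1)\cap N(w_2)|\approx n/2$, but not where that intersection sits, and property~(D) has error $n^{1.7}$, far too crude for a single-vertex neighbourhood. Concretely, colour all edges inside $V'$ and inside $V''$ blue, pick adjacent $w_1,w_2\in V'''$ with $N(w_1)\cap N(w_2)\cap V'=\emptyset$, make $w_1w_2$ and all $w_1$--$V'$, $w_2$--$V'$ edges red, and everything else touching $V'''$ blue. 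Then red is non-bipartite (the edge $w_1w_2$ kills any bipartition extending $(V',V'')$) yet red contains no triangle. The lemma still holds here because \emph{blue} is pancyclic --- but that is exactly the opposite of what your case split predicts. The paper's split is different and avoids this: it asks whether some vertex has many neighbours in both $V'$ and $V''$ in the bipartite colour (yielding pancyclicity of that colour directly), and otherwise shows that almost all vertices cluster into two sets $W_1,W_2\supseteq V',V''$ on each of which the \emph{other} colour is pancyclic, via a Chv\'atal--Erd\H{o}s rotation argument.

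There is a second gap in your bipartite case. From $\delta(\tG)\ge n+1$ and $|P_2|\le n-1$ you only get $\delta(\tG[P_1])\ge 2$, not $>|P_1|/2$; indeed the $n$-fit axioms allow one ``special'' vertex with essentially all its neighbours in $P_2$, and even non-special vertices are only guaranteed about $0.23n$ neighbours in $V'\subseteq P_1$, well short of $|P_1|/2\approx n/2$. So Dirac does not apply, and the edge count $e(\tG[P_1])\ge |P_1|^2/4-n^{1.7}/2$ from property~(D) also falls short of Bondy's hypothesis. The paper handles the analogous pancyclicity statement (its Claim~5.3) with a genuine extension argument rather than an off-the-shelf theorem.
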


Now we are ready to complete the proof of Theorem~\ref{thm:main}.

\begin{proof}[Proof of Theorem~\ref{thm:main}] Clearly, for $n\ge \max\{n_1,n_2\}$ we must have $\tG\to (C_n,C_k)$
since otherwise  Lemmata~\ref{l:RL} and~\ref{l:D} lead to a contradiction. 
Hence $\rs(C_n,C_k)\le \lceil (n+1)(2n-1)/2\rceil$.

Moreover, each graph $G=(V,E)$ such 
that $|V|=r(C_n,C_k)=2n-1$ and  $|E|<\lceil (n+1)(2n-1)/2\rceil$,  contains a vertex $v$ of degree at most~$n$. Let $V'\subseteq V$ be a set of $n-1$ vertices which contains all, except at most one, neighbors of $v$, and $V''=V\setminus V'$.
Now  color all edges between $V'$ and $V''$ with the second color and all other edges by the first color.  
Then, there are no $C_n$ with edges colored in the first color and no odd cycles with edges colored in the second color.
\end{proof}

\section{$n$-fit graphs}\label{sec:G}

Our proofs of  Lemmata~\ref{l:RL} and~\ref{l:D} rely
on the fact that in the graph  $\tG$ we are to color the edges are `uniformly' distributed. The following definition makes it precise. Here, for  two sets of vertices $S$ and  $T$
of a graph $G=(V,E)$, we put
$$e(S,T)=|\{(v,w): v\in S, w\in T, \{v,w\}\in E\}|,$$ 
i.e. $e(S,T)$ denote the number of edges with one end in $S$, the second in $T$, where all edges contained in $S\cap T$ are counted twice. Moreover, by $N_G(v)=N(v)$ we denote all neighbors of the vertex $v$ in $G$.
\begin{defi}
A graph $G=(V,E)$ is called  {\bf $\mathbf{n}$-fit} if the following holds:
	\begin{enumerate}[label=\Alabel]
		\item\label{it:i} $|V|=2n-1$ and $|E|=  \lceil (n+1)(2n-1)/2\rceil    $;
		\item\label{it:ii} the minimum degree of $G$ is $n+1$ (i.e. $G$ is $(n+1)$-regular if $n$ is odd and 
			it has $2n-2$ vertices of degree $n+1$ and one of degree $n+2$ if $n$ is even);
		\item\label{it:iii} for all $v,w\in V$, $v\neq w$, we have
			$$\big| |N(v)\cap N(w)|- n/2\big|\le n^{0.7};$$
		\item\label{it:iv} for all $S,T\subseteq V$ we have 
			$$\big| e(S,T)- |S||T|/2\big|\le n^{1.7}.$$
	\end{enumerate}
\end{defi}

The main result of this section states that for every $n$ large enough one can find an  $n$-fit graph.

\begin{lemma}\label{l:fit}
	There exists $n_0$ such that   $n$-fit graphs exist for each $n\ge n_0$.
\end{lemma}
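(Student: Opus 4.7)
My plan is to prove this lemma by a probabilistic construction: sample $\tG$ from a random graph model with the prescribed degree sequence, and verify the pseudorandomness conditions \ref{it:iii} and \ref{it:iv} with positive probability. When $n$ is odd let $\tG$ be a uniformly random $(n+1)$-regular simple graph on $V=[2n-1]$ (note that $(n+1)(2n-1)$ is even, so such graphs exist, and the distribution can be realized via the configuration model on $(n+1)(2n-1)$ half-edges, conditioning on the positive-probability event of simplicity, which is bounded below in this dense regime by a McKay--Wormald-type estimate). When $n$ is even, distinguish a vertex $v_{0}\in V$ and sample uniformly from simple graphs with $\deg(v_{0})=n+2$ and all other degrees equal to $n+1$; the same configuration-model setup applies. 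In either case \ref{it:i} and \ref{it:ii} hold by construction, and one is left to verify the quasi-random conditions.

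For \ref{it:iii}, fix distinct $v,w\in V$. A short calculation in the configuration model gives
$$\mathbb{E}\,|N(v)\cap N(w)| \;=\; \frac{n(n+1)}{2n-2}+O(1)\;=\;\frac{n}{2}+O(1).$$
For concentration I would use the switching method of McKay--Wormald (or equivalently an edge-exposure martingale on the random pairing): an elementary edge swap changes $|N(v)\cap N(w)|$ by $O(1)$, so Azuma--Hoeffding yields
$$\Pr\bigl(\,\bigl||N(v)\cap N(w)|-n/2\bigr|>n^{0.7}\bigr)\;\le\; \exp(-\Omega(n^{0.4})),$$
which beats the union bound over the $\binom{2n-1}{2}=O(n^{2})$ pairs. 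For \ref{it:iv} the same strategy works with wide margins: $\mathbb{E}\,e(S,T)=|S||T|(n+1)/(2n-2)=|S||T|/2+O(n)$, each swap changes $e(S,T)$ by $O(1)$, and the bound
$$\Pr\bigl(|e(S,T)-|S||T|/2|>n^{1.7}\bigr)\;\le\; \exp(-\Omega(n^{1.4}))$$
comfortably defeats the union bound over the $4^{2n-1}\le e^{O(n)}$ pairs $(S,T)$. Combining the two bounds, $\tG$ is $n$-fit with positive probability.

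The main obstacle is the concentration in the dense random-regular regime ($d\asymp N/2$), where the configuration model requires some care but is by now routine. As an alternative, one can start from $G(2n-1,p)$ with $p=(n+1)/(2n-2)$, where Chernoff-type concentration of codegrees and edge counts is immediate, and then perform a small number (roughly $n^{1.5+o(1)}$) of randomly chosen edge-swaps to equalize the degrees; since each swap perturbs a given codegree or a given $e(S,T)$ only by $O(1)$, and since the swap locations can be spread out, the pseudorandomness survives the modification and still meets the $n^{0.7}$ and $n^{1.7}$ error tolerances.
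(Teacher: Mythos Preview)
Your alternative approach --- start from $G(2n-1,1/2)$, get Chernoff concentration of degrees, codegrees and edge counts, then correct the degree sequence by a small number of local edge swaps while tracking that each vertex is touched only $O(n^{0.6})$ times --- is exactly the route the paper takes, and with the bookkeeping filled in it works.

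Your primary approach, however, has two real gaps. First, the configuration model is not ``routine'' in the dense regime $d\sim N/2$: the expected number of loops is already $\Theta(n)$ and the expected number of repeated pairs is $\Theta(n^{2})$, so the probability of simplicity is $e^{-\Omega(n)}$ (indeed far smaller), not bounded below by anything usable; conditioning on such an event would wipe out any subexponential concentration bound you later prove for the unconditioned pairing. The McKay--Wormald enumeration of dense regular graphs does not proceed via the naive configuration model, so invoking it here does not rescue the conditioning step. Second, even ignoring conditioning, your Azuma calculation for codegrees is off: an edge-exposure (or pairing-exposure) martingale has $m=\Theta(n^{2})$ steps with $O(1)$ Lipschitz increments, so Azuma gives
\[
\Pr\bigl(\bigl||N(v)\cap N(w)|-n/2\bigr|>n^{0.7}\bigr)\le 2\exp\!\bigl(-\Theta(n^{1.4}/n^{2})\bigr)=2\exp\!\bigl(-\Theta(n^{-0.6})\bigr),
\]
which is useless. (Your bound for $e(S,T)$ with $t=n^{1.7}$ does survive, but \ref{it:iii} does not.) One can prove the required pseudorandomness of dense random regular graphs by other means --- for instance via the second-eigenvalue bound of Krivelevich--Sudakov--Vu--Wormald together with the expander mixing lemma --- but that is a different argument from the one you sketched. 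In short: drop the configuration-model route and carry out your alternative, which is the paper's proof.
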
  
\begin{proof}
Although it is expected that a random graph with a degree sequence as described in \ref{it:i} and \ref{it:ii} fulfills also conditions
\ref{it:iii} and \ref{it:iv}, instead  of employing somewhat complicated formula for the number of graphs with a given degree sequence found by Liebenau and Wierman \cite{LW}, we use the standard binomial model of random graph, which, due to the
independence of edges, is much easier to deal with. 

Consider the random graph $\mathcal{ G}_{2n-1}$ which is chosen at random from all graphs with $2n-1$ vertices, 
or, equivalently, a graph with vertex set  $[2n-1]=\{1,2,\dots,2n-1\}$ in which each edge appears independently with probability $1/2$. It is easy to verify
using Chernoff bounds that for $n$ large enough with positive probability (in fact with probability $1-o(1)$) the graph $\mathcal {G}_{2n-1}$ has
the following properties:
	\begin{enumerate}[label=\alabel]
		\item\label{it:a} for every vertex $v$ we have
				$$\big|\deg(v)-n\big|\le n^{0.6};$$
		\item\label{it:b} for every pair of vertices $v$, $w$
				$$\big| |N(v)\cap N(w)|-n/2  \big|\le n^{0.6}; $$
		\item\label{it:c} for every two subsets of vertices $S,T$
				$$\big|e(S,T)-|S||T|/2\big|\le n^{1.6}.$$
	\end{enumerate}
 Thus, let $\hat G=([2n-1],\hat E)$ be a graph for which \ref{it:a}, \ref{it:b}, and \ref{it:c} hold. Clearly, 
 it has also properties \ref{it:iii} and \ref{it:iv} with  a large margin. In order to adjust its degree sequence 
 we modify slightly the edge set  of $\hat G$ and 
 show that there exists sets $E_1,E_2\subseteq [2n-1]^{(2)}$ such that the graph $\tG=([2n-1], (\hat E\setminus E_1)\cup E_2)$ has all vertices 
 of degree $n+1$, except, perhaps one of degree $n+2$, and furthermore the maximum degree of the graph
 $G_1=([2n-1], E_1\cup  E_2)$ is smaller than $136 n^{0.6}$, so \ref{it:iii} and \ref{it:iv} hold and the graph $\tG$ 
 is $n$-fit.
 
 Let us first delete from $\hat G=([2n-1],\hat E)$ some edges so that  the resulting graph  has maximum degree $n+1$. 
 We do it recursively examining all vertices of $\hat G$ one by one. For each vertex $v$ we need to 
 delete not more than  $\deg(v)-(n+1)$ surplus edges incident to $v$ which connect $v$ to those of its neighbors whose degrees changed the least during the procedure. Note that 
 from \ref{it:a} it follows that in the whole process we remove  $|E'_1|\le 2n^{1.6}$ edges. We want to argue, 
 by a direct recursive argument, that a graph 
 $([2n-1],E'_1)$ has maximum degree at most $7n^{0.6} $. To this end note that because of $|E'_1|\le 2n^{1.6}$, in each stage at most $2n/3$ vertices has more than $6n^{0.6}$ incident edges deleted. 
 Thus, since by \ref{it:a} and the recursive assumption in each step of the process the vertex $v$ has 
 at least $n-n^{0.6}-7n^{0.6}>2n/3+n^{0.6}$ 
 neighbors in $\hat G$,  we never delete any edge which join $v$ to its neighbor who already lost at least $6n^{0.6}$ edges incident to it. Because we remove at most $n^{0.6}$ edges incident to $v$, its degree in   $([2n-1],E'_1)$ is at most $7n^{0.6}$.
 
Now we need to add edges to $\hat G=([2n-1],\hat E\setminus E'_1)$ so that the resulting graph has the correct degree sequence. Note that by \ref{it:a} and the estimate for $E'_1$ we need to add to it  $|E''|\le 4n^{1.6}$ edges. In fact we add to it 
 $E_2$,  $E_2=2|E''|$, edges and delete another $E''_1$, $|E''_1|=|E''|$, edges recursively in the following way. 
Let us suppose that two vertices, $v'$ and $v''$ have degree smaller than required. We find two other vertices, 
$w'$ and $w''$, such that $w'$ is not incident to $v'$, $w''$ is not incident to $v''$, but $w'$ and $w''$ are connected by an edge.  Then we delete from the graph an edge $w'w''$ (and put it in the set $E''_1$) and add to the graph (and to the set 
$E_2$) edges $v'w'$ and $v''w''$. In order to assure that the graph  $\hat G=([2n-1], E''_1\cup E_2)$ has small maximum degree for each pair of vertices $v'$ and $v''$ we select $w'$ and $w''$ in the following way. We first choose 
a set $W'$ of $n/10$  non-neighbors of $v'$ which are neighbors of $v''$ and which have smallest degree in currently 
generated part of $\hat G=([2n-1], E''_1\cup E_2)$. In a similar way $W''\subseteq V\setminus W'$ is the set of 
$n/10$ vertices of smallest degree, counted in currently 
generated part of $\hat G=([2n-1], E''_1\cup E_2)$,  in the set $N(v')\setminus N(v'')$. 
		\begin{figure}[ht]
		\centering		
		\begin{tikzpicture}[scale=.7]			
			\coordinate (v1) at (-1,1.5);
			\coordinate (v2) at (8,1.5);
			\coordinate (v5) at (2,1.7);
			\coordinate (v6) at (5,1.7);
			\coordinate (v3) at (2,1.5);
			\coordinate (v4) at (5,1.5);			
			\fill [black!10] (v1) -- (1.9, 2.49) -- (1.9,.51) --cycle;
			\fill [black!10] (v2) -- (5.1, 2.49) -- (5.1,.51) --cycle;			
			\draw [dashed] (v1) -- (1.9, 2.49) -- (1.9,.51) --cycle;
			\draw [dashed] (v2) -- (5.1, 2.49) -- (5.1,.51) --cycle;			
			\draw [thick] (v3) ellipse (.7cm and 1cm);
			\draw [thick] (v4) ellipse (.7cm and 1cm);			
			\fill [black!10] (v3) ellipse (.7cm and 1cm);
			\fill [black!10] (v4) ellipse (.7cm and 1cm);			
			\draw [dashed] (v1) -- (v5);
			\draw [dashed] (v2) -- (v6);
			\draw [thick] (v5)--(v6);			
			\foreach \i in {1,2,5,6} \fill (v\i) circle (2pt);			
			\node [left] at (v1) {\footnotesize  $v'$};
			\node [right] at (v2) { {\footnotesize $v''$}};
			\node [above] at (v5) {\footnotesize $w'$};
			\node [above] at (v6) {\footnotesize $w''$};
			\node at (2.1, .8) {$W'$};
			\node at (5.1, .8) {$W''$};			
		\end{tikzpicture}		
		\caption{The switching procedure.}
		\label{fig:ww}					
	\end{figure}
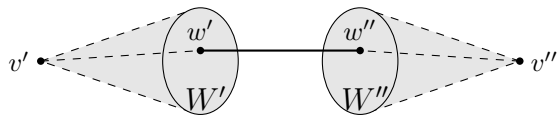
Note that because of $|E''_1\cup E_2|\le 12 n^{1.6}$, 
the graph $\hat G$ contains at most $n/5$ vertices of degree larger than $120 n^{0.6}$ and since both sets 
$N(v')\setminus N(v'')$ and $N(v'')\setminus N(v')$ have at least $n/2-O(n^{0.6})\ge n/3$ vertices in the above procedure 
none of the sets $W'$ and $W''$ contains a vertex whose  degree has been changed more than 
$120 n^{0.6}$ times. 
Moreover, by \ref{it:c}, between sets 
$W'$ and $W''$ there exist at least $|W'||W''|/2- O( n^{1.6})=\Theta(n^2)$ edges, so we can always choose two adjacent vertices $w'$ and $w''$ 
from these two sets. Hence, since by \ref{it:a} and the first part of the proof all vertices of  $\hat G=([2n-1],\hat E\setminus E'_1)$ have degree at least $n-8n^{0.6}$, the maximum degree of $\hat G=([2n-1], E''_1\cup E_2)$ is bounded from above by 
$128n^{0.6}+1$.

Thus, we have shown that the maximum degree of $([2n-1], E'_1\cup E''_1\cup E_2)$ is bounded from above by $136n^{0.6}$. Hence from \ref{it:b} it follows that for any two vertices $v$ and $w$ in $\tG=([2n-1], (\hat E\setminus (E'_1 \cup E''_1))\cup  E_2)$   we have 
$$\big| |N(v)\cap N(w)|-n/2  \big|\le n^{0.6}+272 n^{0.6}\le n^{0.7}, $$
and, by \ref{it:c},  for any two sets $S$ and $T$  of vertices of $\tG$
 $$\big|e(S,T)-|S||T|/2\big|\le n^{1.6}+136 n^{0.6} \max\{|S|,|T|\}\le n^{1.7} .\qed$$
 \renewcommand{\qed}{ }
 \end{proof}

\section{Proof of  Lemma~\ref{l:RL}}\label{sec:RL}

The main tool in the proof of Lemma~\ref{l:RL} is the Regularity Lemma so we start with recalling some definitions and 
basic facts (for a survey of results concerning the Regularity Lemma see Koml\'{o}s and Simonovits~\cite{KS}).

Let $G=(V,E)$ be a graph and let $V_1,V_2\subseteq V$ be a pair of disjoint subsets of vertices of $G$. The \df{density} of 
$(V_1,V_2)$ is defined as 
$$d(V_1,V_2)=\frac{e(V_1,V_2)}{|V_1||V_2|}\,.$$ 
We say that a pair $(V_1,V_2)$ is \df{$\eps$-regular} for some $\eps>0$ if for every pair of subsets $W_1\subseteq V_1$ and 
$W_2\subseteq V_2$ such that $|W_1|\ge\eps |V_1|$ and $|W_2|\ge \eps |V_2|$ we have 
$$|d(V_1,V_2)-d(W_1,W_2)|\le \eps\,.$$
We call an $\eps$-regular pair $(V_1,V_2)$  \df{strongly $\eps$-regular} if every vertex $v_1\in V_1$ has at least $d(V_1,V_2)|V_2|/10$ neighbors in $V_2$, and every vertex $v_2\in V_2$ has at least  $d(V_1,V_2)|V_1|/10$ neighbors in $V_1$. 

The next result states some elementary results on regular pairs.

\begin{fact}\label{f:RG}
For every  $0<\eps<d/100$ there exists $n'$ such that for every subsets $V_1$, $V_2$ of vertices of 
$G=(V,E)$ such that $|V_1|,|V_2|\ge n'$ the following holds. 
\begin{enumerate}[label=\rmlabel]
\item\label{f5:i}For each $\eps$-regular pair  $(V_1,V_2)$ of density $d$ there are sets $W_1\subseteq V_1$, 
$|W_1|\ge (1-2\eps)|V_1|$, $W_2\subseteq V_2$, 
$|W_2|\ge (1-2\eps)|V_2|$, such that the pair $(W_1,W_2)$ is strongly $\eps$-regular.
\item\label{f5:ii} If $(V_1,V_2)$ is a strongly $\eps$-regular pair and $v_1\in V_1$, $v_2\in V_2$ then for every odd $k$ 
such that $$3\le k\le 2(1-2\eps/d)\min\{|V_1|,|V_2|\}$$
there exists a path of length $k$ which starts at $v_1$ and ends in $v_2$. 
\end{enumerate}
\end{fact}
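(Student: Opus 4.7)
For part (i), my plan is a standard pruning argument. I would define $B_1\subseteq V_1$ as the set of vertices having fewer than $d|V_2|/10$ neighbors in $V_2$, and $B_2\subseteq V_2$ analogously. If $|B_1|\ge\eps|V_1|$, then applying $\eps$-regularity to the pair $(B_1,V_2)$ would force $d(B_1,V_2)\ge d-\eps$, contradicting $d(B_1,V_2)<d/10$, since $\eps<d/100<9d/10$. Hence $|B_1|<\eps|V_1|$ and symmetrically $|B_2|<\eps|V_2|$. Setting $W_i:=V_i\setminus B_i$ gives $|W_i|\ge(1-\eps)|V_i|\ge(1-2\eps)|V_i|$, and every $v\in W_1$ has at least $d|V_2|/10-|B_2|\ge d|W_2|/10$ neighbors in $W_2$ once $n'$ is large. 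The $\eps$-regularity of $(W_1,W_2)$ is inherited from $(V_1,V_2)$: any $W_1'\subseteq W_1$ with $|W_1'|\ge\eps|W_1|$ still satisfies $|W_1'|\ge\eps(1-\eps)|V_1|$, so after absorbing the $O(\eps)$ density shift caused by removing the $B_i$'s into the constants (i.e. by slightly strengthening the input $\eps$), the $\eps$-regularity condition for $(W_1,W_2)$ persists.

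For part (ii), I would build the path greedily, alternating sides. Assume $v_1\in V_1$ and $v_2\in V_2$ (the opposite case is identical). Since $k$ is odd, a $v_1$-$v_2$ path of length $k$ alternates between $V_1$ and $V_2$, using exactly $(k+1)/2$ vertices from each side. The length bound $k\le 2(1-2\eps/d)\min\{|V_1|,|V_2|\}$ guarantees that throughout the construction, the number of unused vertices on each side is at least $2\eps\min\{|V_1|,|V_2|\}/d-1$, which exceeds $\eps|V_i|$ for $n'$ large. The key tool I would use is the folklore consequence of $\eps$-regularity: for any $Y\subseteq V_2$ with $|Y|\ge\eps|V_2|$, all but at most $\eps|V_1|$ vertices of $V_1$ have at least $(d-\eps)|Y|$ neighbors in $Y$, and symmetrically.

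First I would fix a ``good'' subset $A\subseteq V_2$ of vertices $v$ for which $|N(v)\cap N(v_2)\cap V_1|\ge(d^2-2\eps)|V_1|$; a double application of $\eps$-regularity (first to count typical neighbors of $v_2$ in $V_1$, then to count second-neighbors back in $V_2$) gives $|A|\ge(1-\eps)|V_2|$. I would then extend the path from $v_1$ one vertex at a time: at each step, I select the next endpoint in the opposite side minus the used set $U$ to be a neighbor of the current endpoint that still has many neighbors back in the unused portion of the current side (guaranteed by the folklore lemma above, as long as the unused set has size $\ge\eps|V_i|$, which is ensured by our slack). I would stop the forward extension at length $k-2$, taking care that $u_{k-2}\in A$; this is possible since $A$ has relative density $\ge 1-\eps$ in $V_2$, far more than the small forbidden fraction lost at each greedy step. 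To close, I pick $u_{k-1}\in V_1\setminus U$ as a common neighbor of $u_{k-2}$ and $v_2$; by the choice of $A$, this common neighborhood has size $\ge(d^2-2\eps)|V_1|-|U|>0$, so such a vertex exists. The hard part will be the bookkeeping of the slack parameter $\eps/d$ and ensuring the ``good'' restriction $A$ is compatible with the greedy extension, but each individual step uses only the two elementary consequences of $\eps$-regularity above, so no deep machinery is required.
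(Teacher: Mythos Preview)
Your proposal is correct and follows essentially the same approach as the paper. The paper's proof is a two-sentence sketch: for \ref{f5:i} ``remove vertices of small degree from both sets,'' and for \ref{f5:ii} ``build the path greedily, choosing at each step a vertex with many neighbours among the unused vertices.'' Your argument instantiates exactly these ideas, with the added (and necessary) detail of how to close the path at the prescribed endpoint $v_2$ via a common-neighbour set $A$; the paper leaves this implicit.
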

\begin{proof} Both parts of the above results are well known and direct consequence of the definition. Thus, part \ref{f5:i}
is obtained by removing vertices of small degree from both sets of $\eps$-regular pairs. A path 
required in  \ref{f5:ii} 
can be built greedily, when in each step we choose a vertex which has a lot of neighbors among vertices which do not 
belong to the part of the path we generated so far.
\end{proof}

Finally,  let us  define 
an {\bf $\eps$-regular partition} of a graph $G=(V,E)$ to be a partition of vertices of $G$ into $s$ parts $V=W_1\cup W_2\cup\dots \cup W_s$ so that
\begin{enumerate}[label=\rmlabel]
\item $\big||W_i|-|W_j|\big|\le 1$, for all $1\le i < j \le s$;
\item among $\binom s2$ pairs $(W_i,W_j)$ all but at most $\eps  s^2/2$ are $\eps$-regular. 
\end{enumerate} 

Our argument is based on  the following version of the well known Szemer\'edi's Regularity Lemma.

\begin{lemma}\label{l:SRL}
For every $\eps>0$ there exists $S$ with the following property. For every graph $G=(V,E)$ whose edges were colored with two colors, i.e. $E=\red{R}\cup \blu{B}$, there is a partition $V=W_1\cup W_2\cup\dots \cup W_s$ of vertices of $G$ into $s$ parts, where   $1/\eps\le s\le S$, which is $\eps$-regular for both graphs 
$\red{G_R}=(V,\red{R})$ and $\blu{G_B}=(V,\blu{B})$.
\end{lemma}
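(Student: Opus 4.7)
The plan is to adapt the standard energy-increment proof of Szemer\'edi's Regularity Lemma, running the argument for both color classes in parallel. For any partition $\mathcal{P}=\{W_1,\dots,W_s\}$ of $V$ and any graph $G'=(V,E')$ define the mean-square density
$$q_{G'}(\mathcal{P})=\sum_{i,j=1}^{s}\frac{|W_i||W_j|}{|V|^2}\,d_{G'}(W_i,W_j)^2,$$
which always satisfies $0\le q_{G'}(\mathcal{P})\le 1$ and is monotone non-decreasing under any refinement of $\mathcal{P}$. The classical Szemer\'edi step asserts that whenever $\mathcal{P}$ fails to be $\eps$-regular for $G'$, there is a refinement $\mathcal{P}'$ with at most $|\mathcal{P}|\cdot 2^{|\mathcal{P}|}$ parts such that $q_{G'}(\mathcal{P}')\ge q_{G'}(\mathcal{P})+\eps^{5}$.

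Starting from an equipartition of $V$ into $\lceil 1/\eps\rceil$ parts, I would iterate the following. Consider the combined energy $q(\mathcal{P})=q_{G_R}(\mathcal{P})+q_{G_B}(\mathcal{P})\le 2$. If $\mathcal{P}$ is already $\eps$-regular for both $G_R$ and $G_B$, stop. Otherwise at least one of the two color classes violates $\eps$-regularity, and applying the classical refinement step to that color produces a partition $\mathcal{P}'$ whose corresponding $q_{G_R}$ or $q_{G_B}$ has grown by at least $\eps^{5}$. Crucially, the energy of the other color cannot decrease, since energy is monotone under refinement, so $q(\mathcal{P}')\ge q(\mathcal{P})+\eps^{5}$. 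The procedure therefore halts after at most $2/\eps^{5}$ rounds, yielding a partition whose number of parts is bounded by a tower of twos of height $O(\eps^{-5})$; this is the value of $S$ in the statement.

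To enforce the balance condition $\big||W_i|-|W_j|\big|\le 1$, I would run the routine above with parameter $\eps/2$ and then perform the standard equalization: chop each part into blocks of a common small size, collect the resulting $O(s)$ leftover vertices into an exceptional set, and redistribute them arbitrarily among the blocks. Since this moves only an $O(\eps)$ fraction of vertices and shifts each pairwise density by $O(\eps)$, $\eps$-regularity survives for both colors simultaneously, and the lower bound $s\ge 1/\eps$ is preserved by construction. The only point that requires care when running the two-color argument in parallel is the observation that the Szemer\'edi refinement step is nothing more than a partition refinement, so its effect on the other color's energy is automatically controlled; this is what lets the classical proof go through without any new idea, and I do not expect any genuinely new obstacle to arise beyond keeping track of the bookkeeping.
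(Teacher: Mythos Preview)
The paper does not actually prove this lemma; it is stated as ``the well known Szemer\'edi's Regularity Lemma'' and used as a black box, with a reference to the survey of Koml\'os and Simonovits. Your energy-increment argument with the combined potential $q=q_{G_R}+q_{G_B}$ is the standard way to obtain the multicolor version, and the sketch is correct.
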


The main motivation of using the Regularity Lemma for studying the Ramsey numbers of sparse structures such as cycles is based on the following simple observation due to  {\L}uczak~\cite{L}. Let us suppose that we apply 
Lemma~\ref{l:SRL} to a graph  with some $\eps>0$. We construct a (2-colored) {\bf $\eps$-reduced graph} $\Gepp$ by replacing each subset $W_i$ by a  single vertex $\bw_i$, and if a pair $(W_i,W_j)$ is $\eps$-regular in both colors we color the edge $\bw_i\bw_j$  with the color which appear more frequently among edges joining these sets (in case of a draw we can choose any of the colors, say, the first one). 
In such a way we obtain a 2-coloring of edges of a graph $\kepp$ obtained from the complete graph from which we removed not more than $\eps s^2$ edges. Note that if $G$ is $n$-fit and the edge $\bw_i\bw_j$
of $\Gepp$ is colored with some color, the density of the pair $(W_i,W_j)$ in $\tG$ in this color is at least $1/5$.
It turns out that, instead of looking for a long monochromatic cycle in $G$, it is enough to find large matching 
contained in one monochromatic component in the auxiliary graph $\Gepp$. 

In order to make this statement precise, we say that a graph has property $M_t$ if there exists a matching saturating $\lceil t\rceil$ vertices which is contained in one non-bipartite component of this graph.
It is well known (see {\L}uczak~\cite{L} and
Figaj and {\L}uczak~\cite{FL1, FL2}) that if a graph induced by one of the colors of 2-colored $\eps$-reduced graph 
$\Gepp$ has property $M_t$, then $G$ contains cycles in this color for every length from $3s$ to 
$(t/s-12\eps)2n$. 
Here we need a slightly stronger statement.

\begin{lemma}\label{l:match}
	Let us suppose that an $n$-fit graph $\tG=(V,E)$ is colored with two colors, \red{red} and \blu{blue}, and $\Gepp$ 
	is an $\eps$-reduced graph for this coloring for some $\eps$, where $0<\eps<10^{-5}$. Then, for any constant $a$, $0<a<1$, the following holds.   
If the subgraph induced in $\Gepp$ by one of the colors, say \blu{blue}, has property $M_{as}$, then there exists a set $W\subseteq V$ such that $|W|\ge (a-2\eps)2n$ and for every two vertices $w,w'\in W$  and each  $\ell$, $3s\le \ell\le (a-12\eps)2n$, there exists a monochromatic path of length $\ell$ joining $w,w'$. 
\end{lemma}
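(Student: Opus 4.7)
The plan is to lift the blue matching $M$ guaranteed by property $M_{as}$ into long monochromatic paths in $\tG$, using Fact~\ref{f:RG}. First, for each matching edge $\bw_i\bw_j\in M$ the $n$-fit condition~\ref{it:iv} gives $e(W_i,W_j)\approx|W_i||W_j|/2$, so the $\eps$-regular pair $(W_i,W_j)$ has blue density at least, say, $1/5$. Fact~\ref{f:RG}\ref{f5:i} then produces a strongly $\eps$-regular sub-pair $(W_i',W_j')$ with $|W_i'|\ge(1-2\eps)|W_i|$. I set $W$ to be the union of these $W_i'$ over the $\lceil as\rceil$ classes saturated by $M$; since each class has size $(2n-1)/s\pm 1$, this yields $|W|\ge(a-2\eps)\cdot 2n$.

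Given $w,w'\in W$, say with $w\in W_i'$ matched to $W_j'$ and $w'\in W_k'$ matched to $W_l'$, I will construct a blue path in $\tG$ from $w$ to $w'$ as follows. Using the connectedness of the blue component $C$ of $\Gepp$, I fix a reduced walk running from $\bw_i$ to $\bw_k$ which alternates matching edges of $M$ with ``transitional'' edges of $C$. Each matching step is then realized in $\tG$ as a long path of chosen odd length via Fact~\ref{f:RG}\ref{f5:ii} (odd lengths up to roughly $2(1-2\eps/d)|W_\cdot'|$ are available), while each transitional step becomes a single blue edge of $\tG$ whose endpoints lie in the two relevant matching pairs and avoid the already-used vertices; such edges exist by $\eps$-regularity, since after excluding the handful of used vertices still almost all candidates have many blue neighbors on the other side. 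Concatenating the segments yields a blue path from $w$ to $w'$ whose length equals the sum of the odd segments plus the number of transitional edges.

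By choosing how many matching pairs to visit (at least on the order of $s$, giving the lower bound $3s$) and by independently tuning the odd segment lengths, I can realize every integer length in $[3s,(a-12\eps)\cdot 2n]$ of a single fixed parity: the upper bound reflects $|W|\approx 2an$, and the $12\eps$ absorbs the $2\eps$ trimming loss, the $(1-2\eps/d)$ factor from Fact~\ref{f:RG}\ref{f5:ii}, and a small endpoint reserve.

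The main obstacle is \textbf{parity}. Since each matching segment has odd length, a reduced walk with a fixed pattern of matching and transitional edges realizes only lengths of one fixed parity. To hit both parities I will exploit the non-bipartiteness of $C$: $C$ contains an odd cycle, and rerouting a portion of the reduced walk once through this odd cycle flips the total parity at the price of a bounded number of extra transitional edges and one extra matching segment (set to its minimal length $3$). Combining the two parity classes, every integer $\ell$ in $[3s,(a-12\eps)\cdot 2n]$ is realized by a monochromatic path between $w$ and $w'$, which completes the proof.
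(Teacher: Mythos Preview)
Your proposal is correct and follows essentially the same approach as the paper: trim each matching pair to a strongly $\eps$-regular sub-pair (this is the set $W$), build a blue walk in the reduced component of the right parity through the matching edges, lift it to a short blue path in $\tG$, and then stretch it to the desired length by replacing matching-pair edges with long odd paths via Fact~\ref{f:RG}\ref{f5:ii}.

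The one organizational difference worth noting is how the reduced walk is produced. You alternate matching edges with ``transitional'' edges of the component and then fix the parity by an extra detour through the odd cycle. The paper instead takes a unicyclic blue subgraph $\blu{\mathbf U}$ containing both the matching $\blu{\mathbf M}$ and an odd cycle, and observes that the closed walk traversing every edge of $\blu{\mathbf U}$ twice automatically yields, for any two endpoints, walks of \emph{both} parities each containing every edge of $\blu{\mathbf M}$ and of length at most $2s$. This packages your two steps (visit all matching edges; adjust parity) into one, and makes both the lower bound $3s$ and the upper bound $(a-12\eps)2n$ immediate without further bookkeeping. The paper also spells out the endpoint issue you allude to as ``endpoint reserve'': a vertex $v_i\in W_i$ may have no blue neighbours outside its own strongly regular pair $(W_i,W_k)$, which is handled by a two-step detour $v_iu_1u_2$ inside that pair, costing at most $+4$ in length overall.
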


\begin{proof}[Sketch of the proof of Lemma~\ref{l:match}] The proof follows closely the argument presented in \cite{L, FL1, FL2} so we just outline it omitting technical details.  
Consider a \blu{blue} component of  $\Gepp$ which contains a matching $\blu{\mathbf{M}}$ saturating 
$\lceil as\rceil$ vertices.  Let $\blu{\mathbf{U}}$ be a unicyclic \blu{blue} subgraph which contains an odd cycle and 
all edges of $\blu{\mathbf{M}}$.  It is easy to see that then there exists a closed walk $\blu{\mathbf{L}}$ which goes through every edge of $\blu{\mathbf{U}}$ twice. Thus, for every two vertices  $\bw'$, $\bw''$  of $\blu{\mathbf{U}}$ there exists 
two walks $\blu{\mathbf{L}_e(\bw', \bw'')},\, \blu{\mathbf{L}_o(\bw', \bw'')}\subseteq \blu{\mathbf{L}}$, of even and odd lengths respectively, 
which start at $\bw'$, end in $\bw''$ and contain each edge of $\blu{\mathbf {M}}$. Note that none of these walks is longer than $2s$. 

Now take all pairs $(W_t, W_u)$ which correspond to the edges in the matching $\blu{\mathbf{M}}$ and delete from each of them 
	some vertices in order to make them strongly $\eps$-regular. The union of all sets which belong to these pairs is our set $W$. Note that, by Fact~\ref{f:RG}\ref{f5:i}, $|W|\ge (a-2\eps)2n$. 

	Next, let us choose $\ell$,  $3s\le \ell\le (a-12\eps)2n$,
	 and take any two vertices $v_i,v_j\in W$ such that $v_i\in W_i$ and  $v_j\in W_j$. Then there exists in  $\Gepp$ a \blu{blue} walk $\blu{\mathbf{L}(\bw_i, \bw_j)}$ of length at most $2s$ and of the same parity as $\ell$ from $\bw_i$  to $\bw_j$ which traverses all the edges from the matching $\blu{\mathbf{M}}$. 
 Now `lift' this walk to a \blu{blue} path $\blu{P}$ in $\tG$ of length at most $2s+4\le 3s$ which joins $v_i$ and $v_j$ and 
	uses at least one edge from each pair $(W_t, W_u)$ which corresponds to an edge of the matching $\blu{\mathbf{M}}$. 
	A slight increase in the length is caused by the fact that the ends of the path $\blu{P}$, vertices $v_i$ and $v_j$, could have few or no neighbors outside the 
	strongly $\eps$-regular pairs they belong to; for instance $v_i\in W_i$ 
could have no neighbors outside the strongly regular pair 
	$(W_i, W_k)$.  In this case we need to start the path $\blu{P}$ with vertices $v_iu_1u_2$, where $u_1\in W_k$,  
	$u_2\in W_i$, where $u_2$ has a lot of neighbors in the next set which correspond to the second vertex of the walk $\blu{\mathbf{L}(\bw_i, \bw_j)}$. Once we have such a path $\blu{P}$ we can increase its length by replacing each edge contained in a strongly $\eps$-regular pairs by  an odd path of required length (see Fact~\ref{f:RG}\ref{f5:ii}). 
\end{proof}

Note that the above results allow us to study properties of colorings of $\kepp$ instead of colorings of an $n$-fit graph $\tG$. Thus, we show that if a coloring of $\tG$ avoids monochromatic cycles 
of required length (which means that in $\Gepp$  monochromatic graphs either have no property $M_t$ 
or contain no triangles), then in $\Gepp$ there exists a large monochromatic bipartite  component (see Lemmata~\ref{l:1} and \ref{l:2} below). In the final step of the proof, we  `lift' a part of this  component to a large monochromatic bipartite graph in  $\tG$.   

We start with a few observations on a graph $\kepp$ which, let us recall,  is obtain from the complete graph $K_s$ by removing at most $\eps s^2$ edges.
\begin{fact}\label{f:degree}
	All but at most $\sqrt{\eps}s$ vertices of $\kepp$ have at least $(1-2\sqrt{\eps})s$ neighbors. 
\end{fact}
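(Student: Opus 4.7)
My plan is to prove this by a one-step double counting on the number of deleted edges. Recall that $\kepp$ is obtained from $K_s$ by removing a set $M$ of at most $\eps s^2$ edges. For a vertex $v$, let $m(v)$ denote the number of edges of $M$ incident to $v$; then $\deg_{\kepp}(v)=(s-1)-m(v)$, so the condition $\deg_{\kepp}(v)<(1-2\sqrt{\eps})s$ translates to $m(v)>2\sqrt{\eps}s-1$.

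First, I would let $X$ be the set of such ``bad'' vertices and estimate $\sum_{v\in X} m(v)$ in two ways. On one hand, each summand is (essentially) at least $2\sqrt{\eps}s$; on the other hand, since each edge of $M$ is counted at most twice in the sum, the total is at most $2|M|\le 2\eps s^2$. Combining these two estimates gives $|X|\le\sqrt{\eps}s$, as claimed. Any integer-rounding slack in passing from ``$m(v)>2\sqrt{\eps}s-1$'' to ``$m(v)\ge 2\sqrt{\eps}s$'' is easily absorbed in the bound, and is harmless in the regime $s\gg 1/\eps$ in which the fact is invoked.

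I do not foresee any genuine obstacle here; this is essentially a pigeonhole / averaging estimate, and it is recorded only so that in the later structural arguments on the $\eps$-reduced graph $\Gepp$ one may treat almost every vertex of $\Gepp$ as having degree very close to $s-1$.
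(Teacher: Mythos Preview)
Your proof is correct and is essentially identical to the paper's own argument: both count the number of deleted edges incident to the ``bad'' vertices and compare with the global bound $|M|\le\eps s^2$, obtaining $|X|\le\sqrt{\eps}s$. The paper phrases it as a one-line contradiction (``otherwise the number of missing edges exceeds $(\sqrt{\eps}s)(2\sqrt{\eps}s)/2=\eps s^2$''), which is exactly your double-counting step with the factor $1/2$ absorbing the fact that each edge is counted at most twice.
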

\begin{proof}
Indeed, otherwise the number of `missing' edges would be larger than   
		$$(\sqrt{\eps}s)\cdot(2\sqrt{\eps}s)/2=\eps s^2,$$
contradicting the definition of $\kepp$.
\end{proof}

\begin{fact}\label{f:match}
	Let $\eps > 0$, $s\ge 1/\sqrt{\eps}$, and let  $W\subseteq V$, $|W|\ge 11\sqrt{\eps}s$, be any set of vertices of $\kepp$. Then there exists $\widetilde{W}\subseteq W$, $|\widetilde{W}|\ge |W|-2\sqrt{\eps}s$, such that the graph  $\kepp[\widetilde W]$ spanned by $\widetilde W$ in $\kepp$ is hamiltonian and contains a triangle (and so it is a 
	non-bipartite connected graph with a perfect matching). 
\end{fact}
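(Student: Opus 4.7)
The plan is to prune $W$ of a small number of ``bad'' vertices and then apply Dirac's theorem on the remainder. By Fact~\ref{f:degree}, at most $\sqrt{\eps}s$ vertices of $\kepp$ have fewer than $(1-2\sqrt{\eps})s$ neighbors; let $\widetilde{W}$ be obtained from $W$ by discarding all such low-degree vertices that happen to lie in $W$ and, if necessary, one further arbitrary vertex so that the cardinality is even. Since $s\ge 1/\sqrt{\eps}$ forces $\sqrt{\eps}s\ge 1$, this gives $|\widetilde{W}|\ge |W|-\sqrt{\eps}s-1\ge |W|-2\sqrt{\eps}s$, matching the bound claimed in the statement.

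The next step is to verify Dirac's condition inside $\kepp[\widetilde{W}]$. Every vertex $v\in\widetilde{W}$ has at most $2\sqrt{\eps}s$ non-neighbors in all of $\kepp$, so its degree in $\kepp[\widetilde{W}]$ is at least $|\widetilde{W}|-1-2\sqrt{\eps}s$. Using the hypothesis $|W|\ge 11\sqrt{\eps}s$ one checks that $|\widetilde{W}|\ge 9\sqrt{\eps}s\ge 4\sqrt{\eps}s+2$, which is equivalent to $|\widetilde{W}|-1-2\sqrt{\eps}s\ge |\widetilde{W}|/2$. Thus Dirac's theorem produces a Hamilton cycle in $\kepp[\widetilde{W}]$; this simultaneously ensures that $\kepp[\widetilde{W}]$ is connected and, because $|\widetilde{W}|$ is even, gives a perfect matching.

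Finally, to see that $\kepp[\widetilde{W}]$ is non-bipartite I would exhibit a triangle. Pick any edge $\{u,v\}$ of the Hamilton cycle; since each of $u,v$ has at most $2\sqrt{\eps}s$ non-neighbors in $\kepp$, their common neighborhood inside $\widetilde{W}$ has size at least $|\widetilde{W}|-2-4\sqrt{\eps}s$, which is strictly positive by the same estimate as above. Any such common neighbor completes a triangle with $u,v$, and the proof is done.

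The argument is really just careful bookkeeping rather than a conceptual obstacle; the only mildly delicate point is making sure the three adjustments—the removal of low-degree vertices, the parity fix, and the degree slack required by Dirac—all fit simultaneously inside the prescribed slack $2\sqrt{\eps}s$, which is precisely why the hypothesis uses the constant $11$.
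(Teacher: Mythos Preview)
Your proof is correct and follows essentially the same approach as the paper: remove the at most $\sqrt{\eps}s$ low-degree vertices (plus one for parity), verify the Dirac condition $\delta(\kepp[\widetilde W])>|\widetilde W|/2$, and conclude. The paper is terser on the triangle, simply noting that the minimum-degree condition ``clearly implies'' one exists, whereas you spell out the common-neighbor count; both are fine.
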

\begin{proof}
	We let $\widetilde{W}$ to be a set of vertices of $V(\kepp)$ obtained from $W$ by removing all vertices with degree in $\kepp$ less than $(1-2\sqrt{\eps})s$ plus, perhaps, one vertex more so that $|\widetilde{W}|$ is even. Then, from Fact~\ref{f:degree}, we get 
	$$|\widetilde{W}|\ge |W|-\sqrt{\eps}s-1 \ge 9\sqrt{\eps}s,$$ and 
$$
			\delta(\kepp[\widetilde{W}]) \ge |\widetilde{W}|-4\sqrt{\eps}s > |\widetilde{W}|/2.
	$$
Hence, from Dirac's Theorem, $\kepp[\widetilde{W}]$ is hamiltonian and the above condition 
also clearly implies that it contains a triangle. 
\end{proof}

\def\wo{{W_1}}
\def\wt{{W_2}}
\def\w{{W}}

Now we can state and prove two main results concerning the coloring of $\kepp$. 

\begin{lemma}\label{l:1}
	For every positive real $\eps\le 10^{-5}$ and integer $s\ge 1/\sqrt{\eps}$ the following holds. 
	If the edges of $\kepp$ are partitioned into two graphs $\blu{B}$ and $\red{R}$ in such a way that \red{$R$} does not have property  $\red{M_{(1/2+13\eps)s}}$ and $\blu{K_3}\not\subseteq \blu{B}$, then  $\blu{B}$ contains an induced  bipartite subgraph with a bipartition $(\wo,\wt)$, such that $|\wo|,|\wt|\ge (1/2-13\sqrt\eps )s$.
\end{lemma}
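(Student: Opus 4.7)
The plan is to translate the matching constraint on the red graph $\red{R}$ into an upper bound on each non-bipartite component of $\red{R}$, combine that with a lower bound on the largest component coming from Mantel's theorem applied to $\blu{B}$, and finally prune the resulting two-class partition via a short vertex-cover argument exploiting the missing edges of $\kepp$.

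First I would show that every non-bipartite component $C$ of $\red{R}$ has $|V(C)|\le(1/2+3\sqrt{\eps})s$. The failure of property $M_{(1/2+13\eps)s}$ forces $\mu(C)\le(1/4+7\eps)s$, and the standard inequality $\alpha(G)\ge|V(G)|-2\mu(G)$ (which follows from $\tau\le 2\mu$) produces an $\red{R}$-independent set $I\subseteq V(C)$ of size at least $|V(C)|-(1/2+13\eps)s$. Since $I$ is independent in $\red{R}$, all $\kepp$-edges inside $I$ are blue, so $\blu{B}[I]$ is triangle-free with at least $\binom{|I|}{2}-\eps s^2$ edges; Mantel's theorem then forces $|I|\le 2\sqrt{\eps}\,s+O(1)$. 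The same Mantel bound, applied to either side of a bipartite component of $\red{R}$ (inside which all $\kepp$-edges are also blue), shows that every bipartite component of $\red{R}$ has size $O(\sqrt{\eps}\,s)$.

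Next I would pin down a dominant non-bipartite component $C^*$. Mantel applied to $\blu{B}$ gives $|\blu{B}|\le s^2/4$, and since $|E(\kepp)|\ge\binom{s}{2}-\eps s^2$ we obtain $|\red{R}|\ge s^2/4-O(\eps s^2)$. The elementary inequality $\sum_j\binom{|C_j|}{2}\le\tfrac{1}{2}(\max_j|C_j|)\cdot s$ then forces the largest component $C^*$ of $\red{R}$ to have at least $s/2-O(\eps s)$ vertices; by the first paragraph it cannot be bipartite, so $|V(C^*)|\in[s/2-O(\eps s),(1/2+3\sqrt{\eps})s]$. Setting $V_1:=V(C^*)$ and $V_2:=V\setminus V(C^*)$ gives two sets of essentially the right sizes.

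Finally I would delete a few vertices from each $V_i$ to eliminate blue edges inside. Any blue edge $uv$ in $V_1$ and any $w\in V_2$ would form a blue triangle $uvw$ unless one of the cross edges $uw,vw$ is missing from $\kepp$, since no cross edge between $V(C^*)$ and its complement can be red. Averaging, some $w^*\in V_2$ has only $\eps s^2/|V_2|=O(\eps s)$ missing edges into $V_1$, and its missing-neighborhood is a vertex cover of $\blu{B}[V_1]$; removing it yields $W_1\subseteq V_1$ with $\blu{B}[W_1]$ edgeless. A symmetric argument via some $v^*\in V_1$ produces $W_2\subseteq V_2$. Since each pruning step discards only $O(\eps s)\ll \sqrt{\eps}\,s$ vertices, the lower bound $(1/2-13\sqrt{\eps})s$ is preserved on both sides.

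I expect the main obstacle to be reconciling the two error scales in the final constants: the Mantel-based lower bound on $|V(C^*)|$ costs only $O(\eps s)$, while the non-bipartite-component upper bound loses $O(\sqrt{\eps}\,s)$, and the constant $13\sqrt{\eps}$ in the conclusion must absorb both. These scales comfortably beat $13\sqrt{\eps}\,s$ provided $\eps\le 10^{-5}$, which is exactly the stated hypothesis.
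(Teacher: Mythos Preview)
Your argument is correct and reaches the same conclusion, but the route differs noticeably from the paper's. The paper leans on Fact~\ref{f:match} (a Dirac-type statement about $\kepp$) to show that $\red{R}$ has no independent set of size $11\sqrt\eps\, s$; this immediately caps every red component at $(1/2+12\sqrt\eps)s$ vertices and rules out large bipartite ones. Then, via the degree Fact~\ref{f:degree}, the paper argues that the high-degree vertices of $\kepp$ fall into \emph{exactly two} red components $\wo,\wt$, and finishes with a one-line common-neighbour observation: any two vertices of $\wo$ share a \blu{blue} neighbour in $\wt$, so by triangle-freeness no edge inside $\wo$ can be \blu{blue}; no pruning is needed. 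Your version replaces Fact~\ref{f:match} by Mantel's theorem (applied to $\blu{B}$ restricted to a red-independent set), replaces the degree/two-component argument by a global edge count that isolates a single dominant red component $C^*$ and takes $V_2$ to be its complement (which need not itself be a single component), and replaces the common-neighbour step by an averaging/vertex-cover pruning. The trade-off: your proof is more self-contained (only Mantel, no appeal to Dirac via Fact~\ref{f:match}) and the losses in the final pruning are of order $\eps s$ rather than $\sqrt\eps\, s$, while the paper's route avoids the pruning altogether and directly delivers sets $\wo,\wt$ with \emph{all} internal $\kepp$-edges \red{red}, which is exactly how the sets are used downstream in the proof of Lemma~\ref{l:RL}.
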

\begin{proof}
Since $\blu{K_3}\not\subseteq \blu{B}$, from Fact~\ref{f:match} it follows that $\red{R}$ contains no independent sets on $11\sqrt\eps s$ vertices. 
Consequently, none of components of $\red{R}$ with more than 
$22\sqrt\eps s$ vertices is bipartite. 
Hence, our assumption that the \red{red} graph $\red{R}$ does not have property $\red{M_{(1/2+13\eps)s}}$ implies that $\red{R}$ contains no components with more than 
	$$(1/2+13\eps)s+11\sqrt{\eps} s\le (1/2+12\sqrt{\eps})s$$
	vertices. 
	
	Furthermore, from Fact \ref{f:degree} we infer  that at most $\sqrt{\eps}s$
	 vertices of $\kepp$ have less than $(1-2\sqrt\eps)s> 0.9s$
	neighbors. Consequently, because of $\blu{K_3}\nsubseteq\blu{B}$, the set of vertices of 
	degrees larger than $0.9s$  spans in $\red{R}$ only two components on vertex sets $\wo$, $\wt$, 
	$$(1/2+12\sqrt{\eps})s\ge |\wo|\ge |\wt|\ge s-|\wo|-\sqrt{\eps}s\ge (1/2-13\sqrt{\eps})s.$$
	Now the assertion follows from the fact that each pair of vertices from ${W_i}$ has a \blu{blue} common neighbor in ${W_j}$, $\{i,j\}=\{1,2\}$ and thereby, since $\blu{K_3}\not\subseteq \blu{B}$, all the edges contained in either $W_1$ or $W_2$, 
must be  \red{red}.
\end{proof}

\begin{lemma}\label{l:2}
	For every positive real $\eps\le 10^{-5}$ and a large enough integer $s$ the following holds. 
	If the edges of $\kepp$ are partitioned into two graphs $\blu{B}$ and $\red{R}$ in
	such a way that none of them has property  $M_{(1/2+
	13 \eps)s}$, then one of the colors contains an induced  bipartite subgraph with a bipartition $(\wo,\wt)$, such that $|\wo|,|\wt|\ge (1/2-4\sqrt{\eps} )s$.  
\end{lemma}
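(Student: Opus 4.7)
The plan is to mirror the strategy of Lemma~\ref{l:1} while exploiting the stronger hypothesis that \emph{both} colors lack $M_{(1/2+13\eps)s}$. First observe that if neither $B$ nor $R$ contains a triangle, Ramsey's bound $r(3,3)=6$ gives an immediate contradiction for $s$ large, so WLOG $R$ contains a triangle.

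Case~1: $K_3\not\subseteq B$. Apply Lemma~\ref{l:1} to obtain an induced bipartite subgraph of $B$ with parts $W_1,W_2$ satisfying $|W_i|\ge(1/2-13\sqrt\eps)s$. Inspecting the proof of Lemma~\ref{l:1}, one actually sees $|W_1|+|W_2|\ge s-\sqrt\eps s$: the only vertices omitted from $W_1\cup W_2$ are those of $K_s^\eps$-degree below $(1-2\sqrt\eps)s$, of which Fact~\ref{f:degree} allows at most $\sqrt\eps s$. Now use the extra hypothesis on $R$: each $W_i$ is $B$-independent, so every $K_s^\eps$-edge inside $W_i$ is red, and $R[W_i]=K_s^\eps[W_i]$ is a near-clique. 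If $|W_i|\ge(1/2+13\eps+2\sqrt\eps)s$, then Fact~\ref{f:match} applied to $W_i$ produces $\widetilde W_i\subseteq W_i$ with $|\widetilde W_i|\ge\lceil(1/2+13\eps)s\rceil$ such that $R[\widetilde W_i]$ is Hamiltonian and non-bipartite, giving a matching saturating $\lceil(1/2+13\eps)s\rceil$ vertices in a non-bipartite red component---a contradiction. Hence $|W_i|<(1/2+13\eps+2\sqrt\eps)s\le(1/2+2.1\sqrt\eps)s$ for $\eps\le 10^{-5}$, and combined with $|W_1|+|W_2|\ge s-\sqrt\eps s$ this yields $|W_1|,|W_2|\ge(1/2-4\sqrt\eps)s$, as desired.

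Case~2: both $B$ and $R$ contain triangles. The plan is to show that this configuration cannot coexist with the two matching conditions, and thus does not arise. Let $C_B,C_R$ be non-bipartite components of $B,R$ containing the respective triangles. Every $K_s^\eps$-edge from $C_B$ to $V\setminus C_B$ is red; symmetrically for $C_R$. Split $V$ into the four natural pieces $C_B\cap C_R$, $C_B\setminus C_R$, $C_R\setminus C_B$, $V\setminus(C_B\cup C_R)$. The cross-edges between any two of these pieces are forced to be a specific color (for example, $K_s^\eps$-edges between $C_B\cap C_R$ and $V\setminus C_B$ must be red, whereas those between $C_B\cap C_R$ and $V\setminus C_R$ must be blue). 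The plan is to track these forced colorings, pick the largest piece (of size at least $s/4$), argue that inside it one of the colors is near-complete enough that Fact~\ref{f:match} produces a Hamiltonian non-bipartite subgraph on $\ge\lceil(1/2+13\eps)s\rceil$ vertices in one of the monochromatic non-bipartite components, thereby violating the corresponding $M_{(1/2+13\eps)s}$ condition.

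The main obstacle lies in Case~2: unlike Case~1, where $B$-bipartiteness exposes $R[W_i]$ as a near-clique directly, here a non-bipartite monochromatic component need not be internally dense in its own color (a vertex of $C_B$ may have small $B$-degree inside $C_B$), so bounding the size of $C_B$ from the matching condition is not immediate. The technical heart is the combinatorial dichotomy that on the near-complete graph $K_s^\eps$, two simultaneous matching-free non-bipartite colors cannot coexist; pinning this down requires careful edge-counting across the four-piece partition above and a delicate balancing of Fact~\ref{f:match} applied in both colors.
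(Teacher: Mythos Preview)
Your Case~1 is fine and in fact recovers the sharper $(1/2-4\sqrt\eps)s$ bound cleanly. The problem is Case~2: you assert that if both $B$ and $R$ contain triangles then the two matching constraints force a contradiction, but this is false. Take the very configuration the lemma is meant to output: a red bipartite component on $W_1\cup W_2$ with $|W_i|\approx s/2$, all internal $W_i$-edges blue, and a leftover set $T$ with $|T|\le\sqrt\eps s$. Blue certainly has triangles (inside each $W_i$). Now nothing prevents three low-degree vertices of $T$ from forming a red triangle in their own tiny red component, disconnected from the big bipartite one. Both matching hypotheses are satisfied, both colours have triangles, yet there is no contradiction---the bipartite subgraph is simply sitting there in $R$. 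So Case~2 cannot be disposed of by contradiction; you must still locate the bipartite structure inside it. Your four-piece partition sketch does not do this: taking $C_R$ to be the tiny triangle component makes $C_B\setminus C_R$ and $V\setminus(C_B\cup C_R)$ the only large pieces, each of size about $s/2$, which is not enough to manufacture a matching saturating $(1/2+13\eps)s$ vertices in either colour.

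The paper sidesteps the triangle dichotomy entirely. It invokes a path-Ramsey result of Benevides et~al.\ (any graph on $t$ vertices with minimum degree $\ge 0.74t$ arrows $(P_{0.66t},P_{0.66t})$) together with Fact~\ref{f:degree} to produce a monochromatic path on more than $(1/2+13\eps)s$ vertices, say red. The matching hypothesis on $R$ then forces the red component containing this path to be bipartite, and this directly hands you $W_1,W_2$. The rest is bookkeeping: Fact~\ref{f:match} on each $W_i$ bounds $|W_i|\le 0.51s$, and then the blue matching hypothesis (applied to the two non-bipartite blue near-cliques $W_1,W_2$) shows that no vertex of $T=V\setminus(W_1\cup W_2)$ can have blue neighbours in both parts, which together with Fact~\ref{f:degree} forces $|T|\le\sqrt\eps s$. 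The key idea you are missing is to use a \emph{path} rather than a triangle as the seed: a long path automatically carries a large matching, so the non-$M_t$ hypothesis forces bipartiteness of its component without any case analysis.
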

\begin{proof} Benevides {\it et al.}~\cite{Ben} proved that if $t$ is large enough, then for every graph $G$ 
on $t$ vertices with the minimum degree at least $0.74t$ we have  $G\to (\red{P_{0.66t}}, \blu{P_{0.66t}})$. 
Thus, because of $1-3\sqrt{\eps} > 0.74$,
 from Fact~\ref{f:degree} it follows that in one of the colors, say \red{red}, there exists a monochromatic path $\red{P}$ whose length is at least 
$$0.66(1-\sqrt{\eps})s\ge  0.65 s > (1/2+14\eps)s\,.$$

Since the \red{red} graph $\red{R}$ does not have property $M_{(1/2+
	13 \eps)s}$,  the \red{red} component containing $\red{P}$ must be bipartite with bipartition 
${(W_1, W_2)}$, where $|W_1|\ge |W_2|$. All the edges inside $W_1$ and $W_2$ are \blu{blue}, so our assertion and Fact~\ref{f:match} imply that 
\begin{equation}\label{eq1}   
0.51s\ge (1/2+13{\eps}+2\sqrt{\eps})s \ge |W_1|\ge |W_2|\ge 0.65s- |W_1|\ge  0.14 s\ge 11\sqrt{\eps}s.
\end{equation}
Consequently, using Fact~\ref{f:match} again we infer that the \blu{blue} subgraph induced by $W_1\cup W_2$ 
contains two large cycles $\blu{C_1}$ and $\blu{C_2}$ with at least 
$$|W_1\cup W_2|-4\sqrt{\eps} s\ge 0.65 s -  4\sqrt{\eps} s> (1/2+13\eps)s$$
vertices combined, both contained in non-bipartite \blu{blue} components. Therefore, because $\blu{B}$ does not have property $M_{(1/2+13 \eps)s}$, there is no vertex in $T=V \setminus (W_1\cup W_2)$ with \blu{blue} neighbors in both, $W_1$ and $W_2$. This, in turn, in view of Fact \ref{f:degree} and \eqref{eq1}, implies that $|T|\le \sqrt{\eps}s$, and thereby
$$|W_1|\ge |W_2|= s-|T|- |W_1|\ge  (1/2-4\sqrt\eps )s\,.$$
This completes the proof of Lemma~\ref{l:2}.
\end{proof}

\begin{proof}[Proof of Lemma \ref{l:RL}]
Let us assume that, for some large enough $n_1$, integers $n\ge n_1$, and odd $\ell\le n$ are given. Suppose also that we color the edges of an $n$-fit graph $\tG$ with two colors, \red{red} and \blu{blue}, so that there are no \red{red} copies of $\red{C_n}$ and no \blu{blue} copies of $\blu{C_\ell}$. We argue that there exists two disjoint subsets of vertices $\tG$, $V_1$ and $V_2$, $|V_1|,|V_2|\ge 0.99n$, such that all the edges between $V_1$ and $V_2$ are of the same color. 

To this end we apply the Regularity Lemma to a coloring of $n$-fit graph $\tG$ with some small enough $\eps>0$, so that all arguments hold. Consider an $\eps$-reduced graph $\Gepp$ and denote by $\red{R}$ and $\blu{B}$ its subgraphs consisting of \red{red} and \blu{blue} edges respectively.

We first observe that from our assertion and Lemma \ref{l:match} it follows that 
the \red{red} graph $\red{R}$ does not have property  ${ M_{(1/2+13\eps)s}}$. For the same reason, if $\ell \ge 3s$, the \blu{blue} graph $\blu{B}$ does not have property ${ M_{(1/2+13\eps)s}}$. Moreover,  if $\ell \le 7s$, $\blu{B}$ does not contain a \blu{blue} triangle $\blu{K_3}$. Indeed, note that for even $\ell$, $4\le \ell \le 7s$, the existence of a cycle ${C_\ell}$ in an $\eps$-regular pair $(V_1, V_2)$ is guaranteed by Fact \ref{f:RG}. If in addition also pairs $(V_1,V_3)$ and $(V_2, V_3)$ are $\eps$-regular, again by Fact \ref{f:RG}\ref{f5:i}, we can find in $V_3$ a vertex with many neighbors in both $V_1$ and $V_2$, and thereby, using the $\eps$-regularity of the pair $(V_1, V_2)$, a cycle $C_\ell$ of all odd length $3\le \ell\le 7s$. 

\def\wo{\mathbf{{W_1}}}
\def\wt{\mathbf{{W_2}}}

Thus we can apply Lemma \ref{l:1}, if $\ell \le 7s$, and Lemma \ref{l:2} otherwise, to $\red{R}$ and $\blu{B}$ concluding that one of these graphs contains an induced bipartite subgraph with a bipartition $(\wo,\wt)$, such that $|\wo|,|\wt|\ge (1/2-13\sqrt{\eps})s$. If, in addition,  $\ell \le 7s$, this bipartite subgraph is \blu{blue}. So assume first that $\red{\Gepp[\wo]}, \,\red{\Gepp[\wt]}\subseteq \red{R}$.
 
Now, Fact \ref{f:match} tells us that for both $i=1,2$, there exists $\mathbf{\widetilde{W}_i}\subseteq \mathbf{{W}_i}$, 
	\[
		|\mathbf{\widetilde{W}_i}| \ge |\mathbf{{W}_i}|-2\sqrt{\eps}s \ge 
		(1/2-15\sqrt{\eps})s > 0.498s,
	\]
such that the graph $\red{\Gepp[\mathbf{\widetilde{W}_i}]}$ is connected, non-bipartite, and contains a perfect matching.
In other words, $\red{\Gepp[\mathbf{{W}_i}]}$ has the property ${M}_{0.498s}$ and thereby, in view of Lemma~\ref{l:match}, there exists a set
$V'_i$ of vertices of $\tG$, 
which is contained in the union of the sets corresponding to vertices $\mathbf{w}\in \mathbf{W_i}$, such that 
$
|V'_i|\ge (0.498 - 2\eps)2n\ge 0.995n,
$
and every two vertices of $V'_i$ are connected in $\tG$ by a \red{red} path $\red{P_k}$ of length $k$ for all $k$, 
\[
3s\le k\le0.9n\le (0.495-12\eps)(2n).
\]

Consequently, if there exist two disjoint \red{red} edges in $\tG[V'_1, V_2']$, one can connect their ends by \red{red} paths of lengths $\lfloor \ell/2\rfloor$ and $\lceil \ell/2\rceil-2$, thus obtaining a \red{red} cycle $\red{C_\ell}$, of any length $\ell$, $7s\le \ell \le n$. 
But  $\red{C_n}\nsubseteq \red{R}$ so if we remove from $\tG[V'_1, V_2']$ ends of one \red{red} edge (if there exists any) we remove them all. Thus, we arrive at   $V_1, V_2 \subseteq V$ with $|V_i| \ge |V'_i|-1$ for $i=1,2$ and all edges joining $V_1$ and $V_2$ in \red{red} color.   

In case $\blu{\Gepp[\mathbf{{W}_i}]}\subseteq \blu{B}$ we have $\ell \ge 7s$ and one can repeat the above reasoning with swapped colors.
\end{proof}

\section{Proof of Lemma~\ref{l:D}}\label{sec:D}

Our goal in this section is to establish Lemma~\ref{l:D}. To this end, in what follows, we study an $n$-fit graph\df{ $\tG=(V,E)$} whose edges are colored with two colors, \red{red} and \blu{ blue}, so that  there exist two disjoint sets \df{$V_1,V_2\subseteq V$}, $|V_1|=|V_2|=0.99n$, such that all edges between these two sets are colored with the same color, say \blu{blue}. Denote by $\red{R}$ and $\blu{B}$ subgraphs of $\tG$ induced by \red{red} and \blu{blue} edges respectively. Set \df{$W=V\setminus (V_1\cup V_2)$}. 
Moreover, we assume that  $n$ is large enough so that all 
inequalities below hold.

We start with a few simple observations.
The first one says that most vertices have a lot of neighbors in both sets, $V_1$ and $V_2$. 

\begin{claim}\label{cl:degree}
	All vertices, except at most one, have at least $0.23n$ neighbors in each of the sets $V_1$ and $V_2$. 
\end{claim}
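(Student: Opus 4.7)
The plan is to define $U_i := \{v \in V : |N(v) \cap V_i| < 0.23n\}$ for $i=1,2$, and to prove that $|U_1 \cup U_2| \le 1$ by ruling out any pair of distinct vertices in this union. I write $d_i(v) := |N(v) \cap V_i|$ throughout.

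First, I note that $|W| = (2n-1) - 2 \cdot 0.99n \le 0.02n$, so property \ref{it:ii} (minimum degree $\ge n+1$) forces $d_1(v) + d_2(v) \ge 0.97n$ for every $v \in V$. Two easy consequences follow: $U_1 \cap U_2 = \emptyset$, since the sum of two quantities below $0.23n$ is below $0.46n$; and every $v \in U_2$ satisfies $d_1(v) > 0.75n$, with the symmetric statement for $U_1$.

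The core of the proof is a short case analysis driven by property \ref{it:iii}, which says $\big| |N(v) \cap N(v')| - n/2 \big| \le n^{0.7}$ for distinct $v, v' \in V$. Suppose for contradiction that two distinct bad vertices $v, v'$ exist. If both lie in $U_2$, then $d_1(v), d_1(v') > 0.75n$, so inclusion--exclusion inside $V_1$ yields
\[
|N(v) \cap N(v') \cap V_1| \;\ge\; d_1(v) + d_1(v') - |V_1| \;>\; 0.51n,
\]
contradicting the upper bound $|N(v) \cap N(v')| \le n/2 + n^{0.7}$ from \ref{it:iii}, once $n$ is large enough that $n^{0.7} < 0.01n$. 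The case $v, v' \in U_1$ is symmetric. In the mixed case $v \in U_1$, $v' \in U_2$, I bound $|N(v) \cap N(v')|$ piece by piece: its intersection with $V_1$ has size at most $d_1(v) < 0.23n$, with $V_2$ at most $d_2(v') < 0.23n$, and with $W$ at most $|W| < 0.02n$, giving
\[
|N(v) \cap N(v')| \;<\; 0.48n,
\]
contradicting the \ref{it:iii} lower bound $\ge n/2 - n^{0.7}$ once $n^{0.7} < 0.02n$.

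There is no real obstacle here; the argument is a clean bookkeeping exercise with \ref{it:ii} and \ref{it:iii}. One point worth flagging is that property \ref{it:iv} on its own is \emph{not} strong enough: the obvious use of it (comparing $\sum_{v \in U_i} d_i(v) < 0.23n\,|U_i|$ to the lower bound $0.495n\,|U_i| - n^{1.7}$) only yields $|U_i| = O(n^{0.7})$, which is much weaker than ``at most one''. The tightening to a single exception genuinely relies on the pairwise common-neighbour bound \ref{it:iii}. The permitted exception is harmless and, in particular, can absorb the single vertex of degree $n+2$ when $n$ is even.
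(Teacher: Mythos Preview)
Your argument is correct and rests on the same engine as the paper's proof, namely the two-sided common-neighbour estimate~\ref{it:iii}. The packaging is slightly different: the paper fixes one bad vertex $w$ (say $w\in U_1$) and shows directly that every other $v$ satisfies both $|N(v)\cap V_2|\ge |N(v)\cap N(w)|-|N(w)\cap V_1|-|W|>0.23n$ (using the lower bound in~\ref{it:iii}) and $|N(v)\cap V_1|\ge |N(v)\cup N(w)|-|V\setminus V_1|-|N(w)\cap V_1|>0.23n$ (using the upper bound via the union). You instead assume two bad vertices and split into same-side and mixed cases; these correspond exactly to the paper's two inequalities, so the content is the same.

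One small arithmetic slip to fix: from $|W|\le 0.02n$ and $\delta(\tG)\ge n+1$ you in fact get $d_1(v)+d_2(v)\ge 0.98n$, not merely $\ge 0.97n$. The weaker bound would only yield $d_1(v)>0.74n$ for $v\in U_2$, and then your inclusion--exclusion gives $2\cdot 0.74n-0.99n=0.49n$, which does not exceed $n/2+n^{0.7}$. With the correct $0.98n$ you obtain $d_1(v)>0.75n$ and the same-side case closes as written.
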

\begin{proof}
	Let us suppose that there exists a vertex $w$ of $\tG$, which has fewer than $0.23n$ neighbors in one of the sets $V_1$, $V_2$, say $V_1$.  Each vertex $v$ shares with $w$ at least $0.5n-n^{0.7} > 0.49n$ neighbors, and so 
		\begin{multline*}
			|N(v)\cap  V_2|\ge |N(v)\cap N(w)|-|N(w)\cap V_1|-|W|
			> 0.49n-0.23n-0.02n>0.23n\,.
		\end{multline*}
	Moreover, 
		\begin{multline*}
			|N(v)\cup N(w)|= |N(v)|+|N(w)|-|N(v)\cap N(w)|
			\ge 1.5n-n^{0.7}> 1.49n,
		\end{multline*}
	so
		\begin{multline*}
			|N(v)\cap V_1|\ge |N(v)\cup N(w)|-|V\setminus V_1|-|N(w)\cap V_1|\\
			\ge 1.49n-1.01n-0.23n>0.23n\,.\qed
		\end{multline*}
	\renewcommand{\qed}{}
\end{proof}

If $\tG$ contains a vertex which has fewer than $0.23n$ neighbors in one of the sets we call it \df{special} and denote by \df{$s$}. In order to simplify the notation we shall always assume that if such special vertex exists, it does not belong to $V_1\cup V_2$. Otherwise we just remove $s$ from $V_1\cup V_2$ and the tiny change of the size of one of the sets $V_1, V_2$ would not affect our estimates, which are always quite crude. 

\def\rp{\red{P_k}}
\def\bp{\blu{P_k}}

Now we show that if there are vertices with a lot of \blu{blue} neighbors in both $V_1$ 
and~$V_2$, we are done.

\begin{claim}\label{cl:bluecycle}
	If there exists a vertex $w\in V$ with at least $n^{0.9}$ \blu{blue} neighbors in both sets $V_1$ and $V_2$, then the \blu{blue} graph $\blu{B}$ contains a cycle \blu{$C_\ell$} for each $\ell$ with $3\le \ell\le n$.
\end{claim}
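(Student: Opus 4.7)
The plan is to combine two simple structural inputs: by hypothesis every edge between $V_1$ and $V_2$ is blue, so $\blu{B}$ contains the complete bipartite graph on $V_1\cup V_2$ whose parts each have size $0.99n$; and, by assumption, $w$ has large blue neighborhoods $N_i:=\{u\in V_i : wu\in \blu{B}\}$ with $|N_1|,|N_2|\ge n^{0.9}$ on each side. Even cycles will come for free from the bipartite graph alone, whereas odd cycles will be built by splicing $w$ between two blue attachment vertices taken from $N_1$ and $N_2$ so as to fix the parity.

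For an even $\ell$ with $4\le \ell\le n$, I would set $m=\ell/2$, pick $m$ distinct vertices $u_1,\dots,u_m\in V_1$ and $v_1,\dots,v_m\in V_2$, and close up the cycle $u_1 v_1 u_2 v_2\cdots u_m v_m u_1$. It has length $\ell$, every edge lies between $V_1$ and $V_2$ (hence is blue), and the selection is feasible since $m\le n/2<0.99n\le |V_i|$.

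For odd $\ell=2m+1$ with $3\le \ell\le n$, I would construct the cycle $w\,x_1\,x_2\,\dots\,x_{\ell-1}\,w$ with $x_i\in V_1$ when $i$ is odd and $x_i\in V_2$ when $i$ is even, choosing $x_1\in N_1$ and $x_{\ell-1}\in N_2$ and then picking the remaining $x_i$ as distinct vertices of the appropriate $V_j$ (all distinct from $w$). Every interior edge $x_ix_{i+1}$ runs between $V_1$ and $V_2$ and is blue, while the edges $wx_1$ and $wx_{\ell-1}$ are blue by the choice of $x_1$ and $x_{\ell-1}$. The construction requires $m$ vertices on each side, which is available because $m\le (n-1)/2$, $|V_i\setminus\{w\}|\ge 0.99n-1$, and $|N_i\setminus\{w\}|\ge n^{0.9}-1\ge 1$.

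The only bookkeeping point is that, if $w$ happens to lie in $V_1\cup V_2$, we must not reuse $w$ in the selection of interior vertices; the surplus margins above absorb this without effort. I do not expect any genuine obstacle, since the claim essentially follows mechanically from the two structural observations highlighted at the outset, the parity being the only subtlety that forced us to use $w$ at all.
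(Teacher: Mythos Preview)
Your argument rests on the premise that ``$B$ contains the complete bipartite graph on $V_1\cup V_2$'', but this is false: the ambient graph $\tG$ is an $n$-fit graph, not $K_{2n-1}$. By property~\ref{it:iv} of the definition, the edge density of $\tG$ between any two sets is roughly $1/2$, so a given pair $(u,v)\in V_1\times V_2$ is an edge of $\tG$ only with probability about one half. Consequently you cannot simply pick $u_1,\dots,u_m\in V_1$, $v_1,\dots,v_m\in V_2$ and declare $u_1v_1u_2v_2\cdots u_mv_mu_1$ to be a cycle; most of those pairs will not be edges at all. The same objection breaks your odd-cycle construction, and in particular your triangle argument: there is no reason why an arbitrary vertex of $N_1$ should be adjacent to an arbitrary vertex of $N_2$.

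The paper's proof handles this by exploiting the pseudorandom structure of $\tG$. Property~\ref{it:iv} guarantees that the pair $(V_1,V_2)$ is strongly $\eps$-regular (with density near $1/2$) in blue, so Fact~\ref{f:RG}\ref{f5:ii} supplies blue paths of every odd length between any $v_1\in V_1$ and $v_2\in V_2$; this yields all even blue cycles directly and, once $w$ is attached via $N_1$ and $N_2$, all odd cycles of length at least $5$. For the triangle one must actually check that some edge of $\tG$ runs between $N_1$ and $N_2$: since $|N_1|,|N_2|\ge n^{0.9}$, property~\ref{it:iv} gives $e(N_1,N_2)\ge n^{1.8}/2-n^{1.7}>0$, and any such edge is blue. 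Your proposal can be salvaged along exactly these lines, but as written it assumes edges that need not exist.
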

\begin{proof} Note first that the pair $(V_1,V_2)$ is strongly $\eps$-regular in the \blu{blue} graph for every constant $\eps>0$ (in fact one can take even $\eps=n^{-0.01}$). Thus, by Fact~\ref{f:RG}\ref{f5:ii}, each \blu{blue} edge 
joining vertices $v_1\in V_1$, $v_2\in V_2$, is contained in a cycle of each even length $\ell_e$, where 
$4\le\ell_e\le n$. 
The very same Fact~\ref{f:RG}\ref{f5:ii} implies that 
$w$ belongs to a \blu{blue} cycle of odd length $\ell_o$, provided $5\le \ell_o\le n$.  Finally, by the definition of $n$-fit graphs, 
the sets  $\blu{N_B(w)}\cap V_1$ and $\blu{N_B(w)}\cap V_2$ are joined by more than $n^{1.8}/5$ edges and
all of them are \blu{blue}, hence \blu{$B$} contains a lot of triangles.  
\end{proof}

Therefore, from now on we assume that all vertices have a lot of \blu{blue} neighbors only in one of the sets 
$V_1$, $V_2$. In particular, in view of Claim \ref{cl:degree}, sets $W_1$ and $W_2$, defined for $i=1,2$,
\begin{equation}\label{eq:wi}
	W_i = \{w\in V\colon {|\red{N_R(w)}\cap  V_i|}\ge 0.22n \textrm{ and } {|\blu{N_B(w)}\cap V_i|}\le n^{0.9}\}, 
\end{equation}
cover the whole set $V$ except, perhaps, the special vertex $s$.  Note also that $V_i\subseteq W_i$ for $i=1,2$. 
Our next claim states that the subgraph spanned in the \red{red} graph by each of the sets $W_i$, $i=1,2$, is  pancyclic.
In fact we prove a slightly stronger statements.

\begin{claim}\label{cl:redcycle}
For every $i=1,2$ and each $\ell$, where $3\le \ell\le |W_i|$, the subgraph spanned by $W_i$ in the \red{red} graph contains a cycle of length $\ell$. Moreover, if for some 
$i=1,2$ there exists
a vertex $v\notin W_i$ which has at least two \red{red} neighbors in $W_i$, then
the \red{red} graph contains a cycle of length $|W_i|+1$. 
\end{claim}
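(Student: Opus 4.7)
The strategy is threefold: first show that the red subgraph $\red{R}[V_i]$ is a dense pseudorandom graph of edge density $\approx 1/2$, then use this to build red cycles of every length $3\le\ell\le|V_i|$ inside $V_i$, and finally absorb the at most $0.02n$ vertices of $W_i\setminus V_i$ to extend to lengths up to $|W_i|$ (and, in the moreover case, to $|W_i|+1$ via $v$). The key starting observation is that by the reduction following Claim~\ref{cl:bluecycle}, every $w\in V_i$ has $|N_B(w)\cap V_i|\le n^{0.9}$, so property~\ref{it:iv} of $\tG$ gives
\[
   e_R(S,T)\;\ge\;\tfrac{|S||T|}{2}-n^{1.7}-(|S|+|T|)\,n^{0.9}\;=\;(1-o(1))\tfrac{|S||T|}{2}
\]
for any disjoint $S,T\subseteq V_i$ with $|S|,|T|\ge n^{0.95}$. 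In particular, any balanced bipartition $V_i=V_i^{(1)}\cup V_i^{(2)}$ forms a strongly $\eps$-regular pair in red with $\eps=n^{-c}$ for small $c>0$.

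For cycles of length $3\le\ell\le|V_i|$ in $\red{R}[V_i]$, Fact~\ref{f:RG}\ref{f5:ii} applied to $(V_i^{(1)},V_i^{(2)})$ produces red paths of every odd length up to $(1-o(1))|V_i|$ joining any pair of vertices from the two halves. For even $\ell\ge 4$, taking such a path of odd length $\ell-1$ with endpoints $v_1\in V_i^{(1)}, v_2\in V_i^{(2)}$ and closing it through a red bipartite edge $v_1v_2$ (of which there are $\Omega(n^2)$) gives $C_\ell$; for odd $\ell\ge 5$, take an odd path of length $\ell-2$ between the halves and attach a common red neighbor $u\in V_i^{(1)}$ of its two endpoints. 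The case $\ell=3$ is immediate from the $\Omega(n^3)$ red triangles guaranteed by pseudorandomness.

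For $|V_i|<\ell\le|W_i|$, let $k=\ell-|V_i|\le 0.02n$ and fix $k$ vertices $z_1,\dots,z_k\in W_i\setminus V_i$. Each set $N_R(z_j)\cap V_i$ has size $\ge 0.22n$ and contains $\Omega(n^2)$ internal red edges by pseudorandomness, so one can greedily reserve pairwise-disjoint red edges $u_ju_j'$ with $u_j,u_j'\in N_R(z_j)\cap V_i$. I would then build a red Hamilton cycle of $V_i$ that uses all $k$ reserved edges --- feasible by a P\'osa rotation-extension in the pseudorandom graph $\red{R}[V_i]$, which comfortably accommodates $o(n)$ prescribed edges --- and replace each $u_ju_j'$ by the path $u_j\,z_j\,u_j'$ to obtain the required $C_\ell$ inside $\red{R}[W_i]$. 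For the moreover part, pick two red neighbors $u_1,u_2\in W_i$ of $v$; it suffices to produce a red Hamilton path of $W_i$ from $u_1$ to $u_2$, since appending $vu_1,vu_2$ then gives $C_{|W_i|+1}$. Such a Hamilton path is obtained by combining Hamilton-connectedness of $\red{R}[V_i]$ (another standard P\'osa-rotation consequence of the above pseudorandomness) with the same absorption trick, applied along the interior of the path so as not to disturb the endpoints.

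The main technical obstacle is carrying out these Hamilton-cycle-through-prescribed-edges and Hamilton-path-with-prescribed-endpoints constructions in $\red{R}[V_i]$: the min-degree in $\red{R}[V_i]$ guaranteed by the hypotheses is only $\ge 0.22n$, well below the Dirac threshold $|V_i|/2$, so off-the-shelf theorems do not apply directly. The remedy is that property~\ref{it:iv} forces the set of vertices of $V_i$ with in-$V_i$ degree below $0.49n$ to have size $O(n^{0.7})$, and this small defect set is comfortably absorbed by the P\'osa rotation-extension routine running on the pseudorandom red graph.
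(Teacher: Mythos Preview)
Your treatment of short cycles via a balanced bipartition of $V_i$ and strong $\eps$-regularity matches the paper's opening move. The divergence is in how one climbs from length $\approx 0.97n$ up to $|W_i|$ (and $|W_i|+1$).

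You propose reserving disjoint absorbing edges for the vertices of $W_i\setminus V_i$, building a red Hamilton cycle of $V_i$ through these $\le 0.02n$ prescribed edges via P\'osa rotation--extension, and then inserting the absorbers; for the ``moreover'' part you appeal to Hamilton-connectedness of $\red{R}[V_i]$. The paper takes a much lighter Chv\'atal--Erd\H{o}s route: given a red cycle $\red{C_\ell}$ with $\ell\ge 0.96n$, any $w\in W_i\setminus V(\red{C_\ell})$ has at least $0.17n$ red neighbours on $\red{C_\ell}$ lying in $V_i$; the set $U\subseteq V_i$ of their cycle-predecessors has $|U|\ge 0.1n$, and property~\ref{it:iv} combined with~\eqref{eq:wi} forces a red edge inside $U$, which yields the standard rotation producing $\red{C_{\ell+1}}$ through $w$. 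Iterating absorbs the remaining vertices one at a time. For the ``moreover'' clause the paper first plants $v$ on a cycle of length $\ge 0.96n$ by splitting $V_i$ so that each half contains $\ge 0.1n$ red neighbours of one of the two red $W_i$-neighbours of $v$, then runs the same extension.

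Your route is plausible, and the expansion needed for P\'osa does follow from property~\ref{it:iv} and~\eqref{eq:wi}, but it leaves real work undone: you must actually carry out rotation--extension through prescribed edges and prove Hamilton-connectedness in a graph whose minimum degree is only $0.22n$ (the $O(n^{0.7})$ defect vertices must be handled, not merely counted); you must patch the gap your regularity step leaves between $(1-o(1))|V_i|$ and $|V_i|$; and in the ``moreover'' part you must deal with the possibility that $u_1$ or $u_2$ lies in $W_i\setminus V_i$ rather than $V_i$. The paper's one-at-a-time Chv\'atal--Erd\H{o}s extension sidesteps all of this, needing only that each leftover vertex has $\Omega(n)$ red neighbours on the current cycle and that every $\Omega(n)$-subset of $V_i$ spans a red edge --- both immediate from the hypotheses.
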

\begin{proof}
Let us first find \red{red} cycles of length $\ell\le 0.97n$. To this end, take any vertex $w\in V_i$, and partition  $V_i\setminus \{w\}$ into two roughly equal sets $V_i=W'_i\cup W''_i$ such that each of them contains at least 
$0.1n$ \red{red} neighbors of $w$. Note that for each small constant $\eps>0$ the pair $(W'_i,  W''_i)$ is $\eps$-regular in
$\tG$ and, more importantly, by Claim~\ref{cl:bluecycle}, also in $\red{R}$. Thus, by Fact~\ref{f:RG}\ref{f5:i} it contains a strongly $\eps$-regular pair $(\hat W'_i, 
\hat W''_i)$ in $\red{R}$ of almost the same size. From that we infer, in the same way as we did in Claim~\ref{cl:bluecycle},  that $V_i$ contains cycles of all length $\ell$ up to $|V_i|-30\eps n\ge 0.97n$.

In search of longer cycles we use the well-known argument of Chv\'atal and Erd\H{o}s~\cite{CE}. Let us observe 
first  that  if $v$ has at least two \red{red} neighbors in $W_i$ then it is contained in a \red{red} cycle of length at least $0.96n$. Indeed, then one can split $V_i$ into two equal sets such that one 
contains at least $0.1n$ \red{red} neighbors of one \red{red} neighbor of $s$, the other contains at least 
$0.1n$ \red{red} neighbors of the other \red{red} neighbor of $s$, and repeat verbatim our previous  argument.

Now let us assume that we have already constructed a \red{red} cycle $\red{C_\ell}=\red{v_1v_2\cdots v_\ell v_1}$ for some $\ell\ge 0.96n$, and let $w$ denote a vertex of $W_i$ which does not belong to it. Since $w$ has at least 
$0.22n$ \red{red} neighbors in $V_i$, at least $0.17n$ of them lay on $\red{C_\ell}$. 
We say that a \red{red} neighbor of $w$ in $V(\red{C_\ell})$ is \df{good} if its predecessor in $\red{C_\ell}$ belongs to $V_i$. 
Denote the set of predecessors in \red{$C_\ell$} of all good vertices by $U$. Observe that because of $|W_i-V_i|\le 0.02n $, $|U| \ge 0.1n$ and thereby $\tG[U]$ contains $O(n^2)$ edges. This however, in view of \eqref{eq:wi}, implies that there are two \red{red} good neighbors of $w$, say $v_i$ and $v_j$, $i<j$, whose predecessors are adjacent in 
$\tG$ by a \red{red} edge. 
Then, the cycle  
$$\red{v_1v_2\cdots v_{i-1}v_{j-1}v_{j-2}\cdots v_i w v_jv_{j+1}\cdots v_\ell v_1}$$ 
is a \red{red} cycle of length $\ell+1$ (see Figure \ref{fig:cycle}).
	\begin{figure}[ht]
	\centering
	\begin{tikzpicture}[scale=.5]
		\coordinate (a) at (-.3,1.2);
		\coordinate (b) at (.3, 1.2);
		\coordinate (c) at (2,0);
		\coordinate (d) at (.3,-1.2);
		\coordinate (e) at (-.3, -1.2);
		\coordinate (f) at (-2,0);
		\coordinate (v) at (-.5, 0);
		\draw [red!40, line width=2.5pt] (v)--(b) [out=0, in=90] to (c) [out=-90, in=0] to (d)--(a) [out = 180, in=90] to (f) [out=-90, in =180] to (e)--(v);	
		\draw (a)--(b) [out=0, in=90] to (c) [out=-90, in=0] to (d)--(e) [out = 180, in=-90] to (f) [out=90, in =180] to (a)--(d);	
		\draw (b)--(v)--(e);
		\foreach \i in {a, b, d, e, v, f} \fill (\i) circle (2pt);	
		\node at (-.7, .2) {\tiny $w$};
		\node at (.5, 1.5) {\tiny $v_i$};
		\node at (-.5, -1.6) {\tiny $v_j$};
		\node at (-.5, 1.5) {\tiny $v_{i-1}$};
		\node at (.5, -1.6) {\tiny $v_{j-1}$};
		\node at (-2.3, .2) {\tiny $v_1$};
		\node at (1.3,.4) {\red{ $C_\ell$}};
	\end{tikzpicture}
	\caption{The enlarged cycle.}
	\label{fig:cycle}
\end{figure}
In such a way we can include to the cycle all remaining vertices of $W_i$ 
one by one.  
	\end{proof}

\begin{proof}[Proof of Lemma~\ref{l:D}]
From Claim~\ref{cl:redcycle} it follows that we are done unless $|W_1| = |W_2| = n-1$, $W_1\cap W_2=\emptyset$, and 
the special vertex $s$ exists and has at most one \red{red} neighbor in each of the sets $W_1$ and $W_2$. 
Note that since  $\delta(\tG) = n+1$, $s$ must have in each of the sets $W_1$, $W_2$,  at least one \blu{blue} neighbor. Moreover, by Claim~\ref{cl:bluecycle}, 
in one of these sets, say $W_2$, $s$ has fewer than $n^{0.9}$ \blu{blue} neighbors. Since the degree 
of $s$ is at least $n+1$ so it has at least $n-1-n^{0.9}$ \blu{blue} neighbors in $W_1$ and so  
\begin{equation}\label{eq:3}
|\blu{N_B(s)}\cap V_1| > n-1-n^{0.9}-|W|>0.97n.
\end{equation}
				
Now let $w$ denote a \blu{blue} neighbor of $s$ in $W_2$.  Note that if $w$ has at least two \red{red} neighbors in $V_1$ we are done by Claim~\ref{cl:redcycle}. Thus, 
in view of Claim~\ref{cl:degree}, $w$ must have at least $0.23n-2\ge 0.2n$ \blu{blue} neighbors in 
$V_1$. We show that in this case the graph $\blu{B}$ contains a \blu{blue} cycle \blu{$C_\ell$} of any length $\ell$, $3\le \ell\le n$. 

Let $v_1$ denote and \blu{blue} neighbor of $w$ in $\blu{N_B(s)}\cap V_1$, 
$v_2$ stand for a \blu{blue} neighbor of $v_1$ in $V_2$ such that $v_2\neq w$, and let
$v_3$ be a \blu{blue} neighbor of $v_2$ in $(\blu{N_B(s)}\cap V_1)\setminus \{v_1\}$.
 Note that from (\ref{eq:3}) and Claim~\ref{cl:degree} it follows that such vertices $v_1,v_2,v_3$ always exist. Then $\blu{swv_1s}$ and $\blu{swv_1v_2v_3s}$ are \blu{blue} cycles of length 3 and $5$, respectively. Moreover, the pair 
$(V_1\setminus \{v_1\}, V_2\setminus \{w\})$ is strongly $\eps$-regular for each constant $\eps>0$, so Fact~\ref{f:RG}\ref{f5:ii} implies that for every odd length $\ell_o$, $7\le \ell_o\le n+4$ the path $\blu{v_3swv_1v_2}$ is contained in an odd \blu{blue} cycle of length $\ell_o$ and the \blu{blue} edge $\blu{v_2v_3}$ belongs to an even \blu{blue} cycle 
of length $\ell_e$ for each $4\le \ell_e\le n$. Thus, the \blu{blue} graph $\blu{B}$ contains cycles of every length between $3$ and $n$.
This completes the proof of Lemma~\ref{l:D}.
	\begin{figure}[ht]
	\centering
	\begin{tikzpicture}[scale=.68]
		\coordinate (s) at (3, 0);
		\coordinate (w) at (2.35,-1.2);
		\coordinate (a) at (.5,1.35);
		\coordinate (c) at (.1, -1.3);
		\coordinate (d) at (-.5, 1);		
		\path [draw=red!80!black] (0,-1.3) ellipse (2.5cm and .75cm);
		\path [draw=red!80!black] (0,1.3) ellipse (2.5cm and .75cm);
		\draw [red!60!black, thick] (-.1,1.3) ellipse (2.3cm and .6cm);
		\fill [red!10] (-.1,1.3) ellipse (2.3cm and .6cm);
		\draw [red!60!black, thick] (-.1,-1.3) ellipse (2.3cm and .6cm);
		\fill [red!10] (-.1,-1.3) ellipse (2.3cm and .6cm);
		\draw [blue!80!black] (s)--(w)--(a);
		\draw [ultra thick, decorate, decoration={snake}, blue!80!black] (d)[out = 180, in= 90] to (-1,0) [out=-90, in=180] to (c);
		\draw [blue!80!black, ultra thick] (d)--(c);
			\draw [blue!80!black, ultra thick]  (a)--(s);
				\draw [blue!80!black, ultra thick] (s)--cycle;
		\draw [blue!80!black,ultra thick] (d)--(s);
			\draw [blue!80!black, ultra thick] (s)--(w);
			\draw [blue!80!black, ultra thick] (w)--(a);
				\draw [blue!80!black, ultra thick] (s)--(a);
					\draw [blue!80!black, ultra thick] (a)--(c); 
		\draw [blue!80!black,ultra thick] (d)--(c); 		 
		\foreach \i in {s,w,a, c,d} \fill (\i) circle (2pt);	 
		\node [right] at (s) {{\small $s$}};
		\node [right] at (w) {\small$w$};
		\node at (.6,1.6) {\small$v_1$};
		\node [below] at (c) {\small$v_2$};
		\node [above] at (d) {\small$v_3$};
		\node at (-3,-.1) {\blu{$P_{\ell_e}=P_{\ell_o-3}$}};
		\node at (3.4,-1.7) {\large \red{$W_2$}};
		\node at (3.4,1.5) {\large \red{$W_1$}};
		\node at (-1.7,-1.3) {\large \red{$V_2$}};
		\node at (-1.7,1.3) {\large \red{$V_1$}};	
	\end{tikzpicture}
	\caption{Constructing  \blu{blue cycles}. Short odd cycles $\blu{swv_1s}$ and $\blu{swv_1v_2v_3s}$ are clearly visible, while all odd cycles of length at least 7, as well as all even cycles, are constructed using wavy paths of appropriate lengths. }
	\label{fig:Cnb2}
\end{figure}
\end{proof}

\begin{bibdiv}
	\begin{biblist}

		\bib{A}{article}{
			  title={Ramsey numbers of cycles in random graphs},
			  author={Ara\'ujo, Pedro},
			  author={Pavez-Sign\'e, Mat\'ias},
			  author = {Sanhueza-Matamala, Nicol\'as}
			  	note={Submitted},
		}

		\bib{Ben}{article}{
   author={Benevides, F. S.},
   author={\L uczak, T.},
   author={Scott, A.},
   author={Skokan, J.},
   author={White, M.},
   title={Monochromatic cycles in 2-coloured graphs},
   journal={Combin. Probab. Comput.},
   volume={21},
   date={2012},
   number={1-2},
   pages={57--87},
   review={\MR{2900048}},
}
			
		\bib{BD}{article}{
			  title={New lower bounds on the size-Ramsey number of a path},
			  author={Bal, Deepak},
			  author={DeBiasio, Louis},
			  journal={The Electronic Journal of Combinatorics},
			  year={2022}
			  	volume={29},
			  	number={1},
			  	pages={\#P1.18},
			  	review={\MR{4396465}},
		}

		\bib{B}{article}{
			title={On size Ramsey number of paths, trees, and circuits. I},
			author={Beck, J{\'o}zsef},
			journal={Journal of Graph Theory},
			volume={7},
			number={1},
			pages={115--129},
			date={1983},
			publisher={Wiley Online Library},
			review={\MR{0693028}}
		}
		
		\bib{CE}{article}{
   author={Chv\'{a}tal, V.},
   author={Erd\H{o}s, P.},
   title={A note on Hamiltonian circuits},
   journal={Discrete Math.},
   volume={2},
   date={1972},
   pages={111--113},
   issn={0012-365X},
   review={\MR{297600}},
   doi={10.1016/0012-365X(72)90079-9},
}
		
		\bib{DP}{article}{
			  title={On some multicolor Ramsey properties of random graphs},
			  author={Dudek, Andrzej},
			  author= {Pra{\l}at, Pawe{\l}},
			  journal={SIAM Journal on Discrete Mathematics},
			  volume={31},
			  number={3},
			  pages={2079--2092},
			  year={2017},
			  publisher={SIAM},
			  review={\MR{3697158}},
			} 
			
		\bib{FS}{article}{
			title={All Ramsey numbers for cycles in graphs},
			author={Faudree, Ralph J},
			 author={Schelp, Richard H},
			journal={Discrete Mathematics},
			volume={8},
			number={4},
			pages={313--329},
			year={1974},
			publisher={Elsevier},
			review={\MR{0345866}},
		} 
	
	\bib{FL1} {article}{
		title={The Ramsey number for a triple of long even cycles},
		author={Figaj, Agnieszka},
		author = { {\L}uczak, Tomasz},
		journal={Journal of Combinatorial Theory, Series B},
		volume={97},
		number={4},
		pages={584--596},
		year={2007},
		publisher={Elsevier}
	}
	
	\bib{FL2} {article}{
		title={The Ramsey numbers for a triple of long cycles},
		author={Figaj, Agnieszka},
		author= {{\L}uczak, Tomasz},
		journal={Combinatorica},
		volume={38},
		number={4},
		pages={827--845},
		year={2018}
	}

\bib{FS}{article}{
   author={Faudree, R. J.},
   author={Schelp, R. H.},
   title={A survey of results on the size Ramsey number},
   conference={
      title={Paul Erd\H{o}s and his mathematics, II},
      address={Budapest},
      date={1999},
   },
   book={
      series={Bolyai Soc. Math. Stud.},
      volume={11},
      publisher={J\'{a}nos Bolyai Math. Soc., Budapest},
   },
   date={2002},
   pages={291--309},
   review={\MR{1954730}},
}

		\bib{GG}{article}{
			title={On Ramsey-type problems},
			author={Gerencs{\'e}r, L{\'a}szl{\'o}}, 
			author = {Gy{\'a}rf{\'a}s, Andr{\'a}s},
			journal={Ann. Univ. Sci. Budapest. E{\"o}tv{\"o}s Sect. Math},
			volume={10},
			pages={167--170},
			year={1967},
			review={\MR{0239997}},
		} 
		
		\bib{HKL}{article}{
			title={The induced size-Ramsey number of cycles},
			author={Haxell, Penny},
			author={Kohayakawa, Yoshiharu},
			author= {{\L}uczak, Tomasz},
			journal={Combinatorics, Probability and Computing},
			volume={4},
			number={3},
			pages={217--239},
			year={1995},
			publisher={Cambridge University Press},
			review={\MR{1356576}},
		}
		
		\bib{JKOP}{article}{
			title={On the size-Ramsey number of cycles},
			author={Javadi, Ramin},
			author={Khoeini, Farideh},
			author = {Omidi, Gholam Reza}
			author = {Pokrovskiy, Alexey},
			journal={Combinatorics, Probability and Computing},
			volume={28},
			number={6},
			pages={871--880},
			date={2019},
			publisher={Cambridge University Press},
			review={\MR{4015660}}
		}
		
		\bib{KS}{article}{
   author={Koml\'{o}s, J.},
   author={Simonovits, M.},
   title={Szemer\'{e}di's regularity lemma and its applications in graph theory},
   conference={
      title={Combinatorics, Paul Erd\H{o}s is eighty, Vol. 2},
      address={Keszthely},
      date={1993},
   },
   book={
      series={Bolyai Soc. Math. Stud.},
      volume={2},
      publisher={J\'{a}nos Bolyai Math. Soc., Budapest},
   },
   date={1996},
   pages={295--352},
   review={\MR{1395865}},
}

		\bib{JM}{article}{
			title={Multicolor Size-Ramsey Number of Cycles},
			author={Javadi, Ramin},
			author= {Miralaei, Meysam},
			journal={arXiv preprint arXiv:2106.16023},
			year={2021}
			} 
		\bib{LW}{article}{
			title={Asymptotic enumeration of graphs by degree sequence, and the degree sequence of a random graph},
			author={Liebenau, Anita},
			author={Wormald, Nick},
			journal={arXiv preprint arXiv:1702.08373},
			year={2017}
			}

		\bib{Let}{article}{
   author={Letzter, Shoham},
   title={Path Ramsey number for random graphs},
   journal={Combin. Probab. Comput.},
   volume={25},
   date={2016},
   number={4},
   pages={612--622},
   issn={0963-5483},
   review={\MR{3506430}},
   doi={10.1017/S0963548315000279},
}
		
		\bib {L}{article}{
			title={$R(C_n,C_n,C_n)\le (4+o(1))n$},
			author={{\L}uczak, Tomasz},
			journal={Journal of Combinatorial Theory, Series B},
			volume={75},
			number={2},
			pages={174--187},
			year={1999},
			publisher={Elsevier}
		}
		
	\bib{LZ1}{article}{
   author={\L uczak, Tomasz},
   author={Rahimi, Zahra},
   title={On Schelp's problem for three odd long cycles},
   journal={J. Combin. Theory Ser. B},
   volume={143},
   date={2020},
   pages={1--15},
   issn={0095-8956},
   review={\MR{4089572}},
   doi={10.1016/j.jctb.2019.11.002},
}	

\bib{LZ2}{article}{
   author={\L uczak, Tomasz},
   author={Rahimi, Zahra},
   title={Long monochromatic even cycles in 3-edge-coloured graphs of large
   minimum degree},
   journal={J. Graph Theory},
   volume={99},
   date={2022},
   number={4},
   pages={691--714},
   issn={0364-9024},
   review={\MR{4429175}},
   doi={10.1002/jgt.22760},
}

		\bib{Ra}{article}{
			title={Small ramsey numbers},
			author={Radziszowski, Stanislaw},
			journal={The Electronic Journal of Combinatorics},
			volume={1000},
			pages={DS1--Jan},
			year={2021},
			review={\MR{1670625}}
			} 
		
		\bib{R}{article}{
			title={On a Ramsey-type problem of JA Bondy and P. Erd{\"o}s. I},
			author={Rosta, Vera},
			journal={Journal of Combinatorial Theory, Series B},
			volume={15},
			number={1},
			pages={94--104},
			year={1973},
			publisher={Elsevier},
			review={\MR{0332567}}
		}
		
	\end{biblist}
\end{bibdiv}		

\end{document}